\setlist{itemsep=0pt,topsep=2pt}
\colorlet{darkgreen}{green!50!black}
\colorlet{dgreen}{darkgreen}
\colorlet{medgray}{gray!75}
\colorlet{lightgray}{gray!30}
\definecolor{linkcol}{rgb}{0,0,0.4} 
\definecolor{citecol}{rgb}{0.5,0,0} 
\tikzstyle{bold}=[draw, line width=2pt]
\tikzstyle{optional}=[dashed]
\tikzstyle{path}=[decorate, decoration={snake, amplitude=.6mm}]
\tikzstyle{small}=[inner sep=2pt]
\tikzstyle{tiny}=[inner sep=1.7pt]
\tikzstyle{textnode}=[inner sep=0pt]
\tikzstyle{triangle}=[draw, regular polygon, regular polygon sides=3]
\tikzstyle{vertex}=[circle, draw, fill=white]
\tikzstyle{reti}=[vertex, fill=black]
\tikzstyle{leaf}=[vertex, rectangle]
\tikzstyle{leaf2}=[vertex, regular polygon, regular polygon sides=3]
\tikzstyle{smallvertex}=[vertex, small]
\tikzstyle{smallleaf}=[leaf, inner sep=3.3pt]
\tikzstyle{smallleaf2}=[leaf2, inner sep=1.7pt]
\tikzstyle{smalltriangle}=[triangle, inner sep=1.5pt]
\tikzstyle{smallreti}=[reti, small]
\tikzstyle{tinyvertex}=[vertex, tiny]
\tikzstyle{toroot}=[smallvertex, fill=white]
\tikzstyle{normal}=[smallvertex, fill=black]
\tikzstyle{match}=[edge,line width=3pt]
\tikzstyle{bgmatch}=[match, opacity=.5]
\tikzstyle{matching}=[edge,line width=3pt]
\tikzstyle{solution}=[gray!60, line width=5pt]
\tikzstyle{arc}=[draw,arrows={-Latex[length=6pt]}]
\tikzstyle{boldarc}=[draw, bold, arrows={-Latex[length=10pt]}]
\tikzstyle{revarc}=[draw, arrows={Latex[length=6pt]-}]
\tikzstyle{boldrevarc}=[draw, bold, arrows={Latex[length=10pt]-}]
\tikzstyle{HRL}=[gray, bgmatch]
\tikzstyle{reverseclip}=[insert path={(current page.north east) --
\tikzset{
%  x=9mm,y=9mm,
  >={Triangle[width=0pt 2 0,length=0pt 3 0]},
  %vertex/.style={circle,inner sep=0pt,outer sep=2pt},
  %vertex2/.style={inner sep=0pt,minimum size=2pt,rectangle,fill=white,draw=black},
  edge/.style={draw=black,very thick},
  elided path/.style={draw=black,thick,loosely dotted},
  subtree/.style={draw=black,thin,fill=black!20},
  region/.style={draw=black!50,fill=black!10,densely dotted},
  transform/.style={draw=black,->,line width=6pt},
  cherry/.style={rounded corners=9pt},
  green cherry/.style={cherry,fill=green!75!black,opacity=0.1},
  red cherry/.style={cherry,fill=red,opacity=0.1},
  blue cherry/.style={cherry,fill=blue,opacity=0.1}
}
\newenvironment{claimproof}{{\noindent\textit{Proof. }}}{\hfill$\lrcorner$} % comment-out for LIPIcs
\newcommand{\dist}[2][]{\ensuremath{\operatorname{dist}}\ifx\relax#1\relax\else\ensuremath{_{#1}}\fi\ensuremath{(#2)}}
\newcommand{\adist}[2][]{\ensuremath{\overline{\operatorname{dist}}}\ifx\relax#1\relax\else\ensuremath{_{#1}}\fi\ensuremath{(#2)}}
\renewcommand{\deg}[2][]{\ensuremath{\operatorname{deg}}\ifx\relax#1\relax\else\ensuremath{_{#1}}\fi\ensuremath{(#2)}}
\newcommand{\pred}[2][]{\ensuremath{\operatorname{N}^\text{in}}\ifx\relax#1\relax\else\ensuremath{_{#1}}\fi\ensuremath{(#2)}}
\newcommand{\inE}[2][]{\ensuremath{\operatorname{E}^\text{in}}\ifx\relax#1\relax\else\ensuremath{_{#1}}\fi\ensuremath{(#2)}}
\newcommand{\indeg}[2][]{\ensuremath{\operatorname{indeg}}\ifx\relax#1\relax\else\ensuremath{_{#1}}\fi\ensuremath{(#2)}}
\newcommand{\suc}[2][]{\ensuremath{\operatorname{N}^\text{out}}\ifx\relax#1\relax\else\ensuremath{_{#1}}\fi\ensuremath{(#2)}}
\newcommand{\outE}[2][]{\ensuremath{\operatorname{E}^\text{out}}\ifx\relax#1\relax\else\ensuremath{_{#1}}\fi\ensuremath{(#2)}}
\newcommand{\outdeg}[2][]{\ensuremath{\operatorname{outdeg}}\ifx\relax#1\relax\else\ensuremath{_{#1}}\fi\ensuremath{(#2)}}
\newcommand{\nh}[2][]{\ensuremath{\operatorname{N}}\ifx\relax#1\relax\else\ensuremath{_{#1}}\fi\ensuremath{(#2)}}
\newcommand{\inc}[2][]{\ensuremath{\operatorname{inc}}\ifx\relax#1\relax\else\ensuremath{_{#1}}\fi\ensuremath{(#2)}}
\newcommand{\LCA}[2][]{\ensuremath{\operatorname{LCA}}\ifx\relax#1\relax\else\ensuremath{_{#1}}\fi\ensuremath{(#2)}}
\newcommand{\widthop}[3]{%
  \ensuremath{%
    \ifx\relax#1\relax%
      \operatorname{#3}_{#2}%
    \else%
      \ifx\relax#2\relax%
        \operatorname{#3}\left({#1}\right)%
      \else
        \operatorname{#3}_{#2}\left({#1}\right)%
      \fi
    \fi%
  }%
}
\newcommand{\LW}[3][]{\ensuremath{\operatorname{C}}\ifx\relax#1\relax\else\ensuremath{_{#1}}\fi^{#2}\ensuremath{(#3)}}
\newcommand{\cost}[2][]{\ensuremath{\operatorname{cost}}\ifx\relax#1\relax\else\ensuremath{_{#1}}\fi\ensuremath{(#2)}}
\newcommand{\Rcost}[2][]{\ensuremath{\operatorname{cost}^R}\ifx\relax#1\relax\else\ensuremath{_{#1}}\fi\ensuremath{(#2)}}
\newcommand{\PROB}[2]{\expandafter\newcommand\csname #1\endcsname{\textsc{#2}\xspace}}
  \newcommand\qedhere{\hfill\qed}
\newcommand\nil[1][.7em]{%
	\makebox[#1]{%
		\kern.07em
		\vrule height.6ex
		\hrulefill
		\vrule height.6ex
		\kern.07em
	}
}
\let\leaves\lbls
\renewcommand*{\backref}[1]{}
\renewcommand*{\backrefalt}[4]{%
\ifcase #1 %
(Not cited.)%
\or
(Cited on page~#2.)%
\else
(Cited on pages~#2.)%
\fi}
\def\NAT@spacechar{~}
\newcommand{\mybox}[2]{
  \begin{tikzpicture}
    \node[minimum width=\linewidth-0.4pt, draw, rounded corners, text width=\linewidth-12pt] (a){#2};
    \node[fill=white, xshift=1em, anchor=west] at (a.north west) {#1};
  \end{tikzpicture}
}
\newcommand{\myboxprobdef}[5]{
  \label{#5}
  \mybox{%
    \ifthenelse{\equal{#3}{}}{}{{#3}\ifthenelse{\equal{#4}{}}{}{ ({#4})}}
  }{%
    \begin{compactdesc}
      \item [Input:] {#1}
      \item [Question:] {#2}
    \end{compactdesc}
  }
}
\newcommand{\probdef}[5]{
\hbox{\vbox{
\begin{quote}
  \label{#5}
  \ifthenelse{\equal{#3}{}}{}{{#3}\ifthenelse{\equal{#4}{}}{}{ ({#4})}}
  \vspace{-1ex}
  \begin{compactdesc}
    \item [Input:] {#1}
    \item [Question:] {#2}
  \end{compactdesc}
\end{quote}
}}
%\newline
}
\newcommand{\taskprobdef}[5]{
\hbox{\vbox{
\begin{quote}
  \label{#5}
  \ifthenelse{\equal{#3}{}}{}{{#3}\ifthenelse{\equal{#4}{}}{}{ ({#4})}}
  \begin{compactdesc}
    \item [Input:] {#1}
    \item [Task:] {#2}
  \end{compactdesc}
\end{quote}
}}
%\newline
}
\newcommand{\paraproblem}[6]{
\hbox{\vbox{
\begin{quote}
  \label{#6}
  \ifthenelse{\equal{#4}{}}{}{{#4}\ifthenelse{\equal{#5}{}}{}{ ({#5})}}
  \begin{compactdesc}
    \item [Input:] {#1}
    \item [Question:] {#2}
    \item [Parameter:] {#3}
  \end{compactdesc}
\end{quote}
}}
}
\theoremstyle{plain}
\newdefinition\undefined
  \let\newdefinition\newtheorem
  \newdefinition{definition}{Definition}
\newtheorem{thm}{Theorem}
\newtheorem{lem}[thm]{Lemma}
\newtheorem{prop}[thm]{Proposition}
\newtheorem{cor}[thm]{Corollary}
\newtheorem{obs}[thm]{Observation}
\theoremstyle{definition}
\newtheorem{claim}{Claim}
\newdefinition{construction}{Construction}
\crefname{section}{Section}{Sections}
\crefname{table}{Table}{Tables}
\crefname{figure}{Figure}{Figures}
\crefname{lemma}{Lemma}{Lemmas}
\crefname{corollary}{Corollary}{Corollaries}
\crefname{definition}{Definition}{Definitions}
\crefname{transformation}{Transformation}{Transformations}
\crefname{proposition}{Proposition}{Propositions}
\crefname{claim}{Claim}{Claims}
\crefname{observation}{Observation}{Observations}
\crefname{construction}{Construction}{Constructions}
\crefname{sem}{Semantics}{Semantics}
\crefname{rrule}{Rule}{Rules}
\crefname{trule}{Tree Rule}{Tree Rules}
\crefname{prule}{Path Rule}{Path Rules}
\crefname{asm}{Assumption}{Assumptions}
\crefname{lem}{Lemma}{Lemmas}
\crefname{thm}{Theorem}{Theorems}
\crefname{cor}{Corollary}{Corollaries}
\crefname{obs}{Observation}{Observations}
\crefname{prop}{Proposition}{Propositions}
\crefname{enumi}{}{}
\crefname{equation}{}{}
\renewcommand\bibsection%
\tikzset{
  edge/.style = {draw=black,thick,line cap=round,line join=round},
  edge0/.style={edge,densely dashed},
  pair/.style={draw=black!40,fill=black!15,inner sep=1pt,rounded corners=8pt},
  network/.style = {edge,thin,fill=black!20!white},
  cutout/.style = {draw=white,line width=2.4pt},
  % Old styles, not really needed anymore
  tree/.style={draw,thick},
  t1/.style={tree,black},
  t2/.style={tree,blue!90!black},
  t3/.style={tree,red!80!black},
  t4/.style={tree,cyan!70!black},
  t5/.style={tree,orange!70!black},
  t6/.style={tree,cyan!90},
  sedge/.style={fill=black!10},
  sedge boundary/.style={very thin,draw=black!20},
  strand/.style={fill=black!30},
  strand boundary/.style={very thin,draw=black!40},
  sbundle/.style={fill=black!20,draw=black!80,densely dotted},
  svertex/.style={very thin,draw=black!50,fill=black!40},
  embed/.style={line width=2pt}
}
\let\lg\log
\let\epsilon\varepsilon
\newcommand\M{\mathcal{M}}
\newcommand\N{\mathcal{N}}
\renewcommand\S{\mathcal{S}}
\newcommand\T{\mathcal{T}}
\newcommand\NN{\mathbb{N}}
\let\netlab\Lambda
\let\tuplab\Gamma
\newcommand\unrooted[1]{#1^{\text{u}}}
\newcommand{\netlabnr}[1][n,r]{\ensuremath{\netlab_{#1}}}
\newcommand{\unetlabnr}[1][n,r]{\ensuremath{\unrooted{\netlab}_{#1}}}
\newcommand{\tuplabnr}[1][n,r]{\ensuremath{\tuplab_{#1}}}
\newcommand{\utuplabnr}[1][n,r]{\ensuremath{\unrooted{\tuplab}_{#1}}}
\newcommand{\NLEnet}[1][]{\ensuremath{(#1 N, #1 \lambda_E)}}
\newcommand{\uNLEnet}[1][]{\ensuremath{(#1 \unrooted{N}, #1 \lambda_E)}}
\newcommand{\NLEtup}[1][]{\ensuremath{\langle #1 N, #1 \lambda_E\rangle}}
\newcommand{\uNLEtup}[1][]{\ensuremath{\langle #1 \unrooted{N}, #1 \lambda_E\rangle}}
\title[When Many Trees Go to War]{When Many Trees Go to War:\\On Sets of Phylogenetic Trees With Almost No Common Structure\thanks{Most of this work was carried out while both authors were at LaBRI, Université Bordeaux, France.}}
\author{Mathias Weller\affiliationmark{1,2} \and Norbert Zeh\affiliationmark{3}\thanks{The work of the second author was supported by NSERC Discovery Grant number RGPIN-2025-06235.}}
\affiliation{
LIGM, Université Gustave Eiffel, Marne~la~Vallée, France\\
Centre National de Recherche Scientifique, France\\
Faculty of Computer Science, Dalhousie University, Halifax, Canada}
\keywords{Phylogenetic Networks, Displayed Trees, Combinatorial Bounds}
\begin{document}
\maketitle
% ------------------------------------------------------------------------------

% ------------------------------------------------------------------------------
\begin{abstract}\noindent
  It is known that any two trees on the same $n$~leaves can be displayed by a
  network with $n-2$ reticulations, and there are two trees that cannot be
  displayed by a network with fewer reticulations.
  But how many reticulations are needed to display multiple trees?  For any set
  of $t$ trees on $n$ leaves, there is a trivial network with $(t - 1)n$
  reticulations that displays them. To do better, we have to exploit common
  structure of the trees to embed non-trivial subtrees of different trees into
  the same part of the network.
  In this paper, we show that, for $t \in o\left(\sqrt{\lg n}\right)$, there is a set
  of $t$ trees with virtually no common structure that could be exploited.
  More precisely, we show that, for any $t\in o\left(\sqrt{\lg n}\right)$, there are $t$~trees
  such that any network displaying them has $(t-1)n - o(n)$~reticulations.  For
  $t \in o(\lg n)$, we obtain a slightly weaker bound.  We also prove that
  already for $t = c\lg n$, for any constant $c > 0$, there is a set of $t$
  trees that cannot be displayed by a network with $o(n \lg n)$ reticulations,
  matching up to constant factors the known upper bound of $O(n \lg n)$
  reticulations sufficient to display \emph{all} trees with $n$ leaves.
These results are based on simple counting arguments and extend to unrooted
networks and trees. \end{abstract}
% ------------------------------------------------------------------------------

% ------------------------------------------------------------------------------
\section{Introduction}\label{sec:intro}
% ------------------------------------------------------------------------------

%
% Phylogenetics & Networks
%
Phylogenetics is the study of evolutionary relationships among a set of taxa,
typically represented using tree-like structures that depict patterns of
descent from common ancestors.
Traditionally, these relationships have been modelled with phylogenetic trees,
which are rooted or unrooted trees whose leaves correspond to present-day taxa
and whose internal nodes represent ancestral lineages~\cite{SS03}.
However, due to processes such as hybridization, horizontal gene transfer, and
recombination, tree models often fail to capture the full complexity of
evolutionary histories~\cite{Doo99,Mal05,DR09,BMR+09}. This has led to the
development of phylogenetic networks, which are graphical models that
generalize trees by allowing for ``reticulate'' (non-tree-like)
events~\cite{HRS10,Mor11}.

Both rooted and unrooted trees and networks are considered in practice,
depending on the data and the biological questions at hand. Rooted structures
incorporate ancestral/temporal directionality, which is important for inferring
evolutionary pathways, while unrooted structures are often used when such
directionality is unknown or when analyzing symmetric distance
data~\cite{SS03,HS12}.

%
% displayed trees
A central concept in the analysis of phylogenetic networks is that of a network
``displaying'' a tree: informally, a network displays a tree if the tree can be
embedded in the network in a way that respects its topology~\cite{HRS10}.  A
formal definition will be given in \cref{sec:preliminaries}.
This notion is at the heart of computational problems in phylogenetics like
comparing phylogenies or parsimoniously reconstructing networks~\cite{BS07}.
Indeed, the computational problems \textsc{Tree Containment} (decide whether a
given tree is embeddable into a given network)~\cite{KNL+08, ISS10, BS16,
GDZ17, Gun18, Wel18} and \textsc{Hybridization Number} (construct a smallest
network displaying all given
trees)~\citep{BSS05,BGMS05,bordewich2007computing,van2011two,kelk2012cycle}
have been studied extensively.
%
% NZ: Removed the following sentence because I am not entirely sure what it
% really says.  Yes, we technically define the minimum number of reticulations
% such that every set of t trees can be displayed by a network with that many
% reticulations.  Thus, it is clearly an upper bound and also the lower bound
% on the worst case, by definition.  But we do not really determine what this
% number really is.  All we have is that it is at most (t - 1)n from the
% trivial network, and at least (t - 1)n - o(n).  This latter lower bound as
% such is not an upper bound though.  Maybe I misunderstood what you were
% trying to say.
%
% Our lower bound on the number of reticulations needed to display \emph{any}
% $t$~trees also doubles as an upper bound on the number of reticulations on any
% network displaying $t$~\emph{given} trees.

%
% How much reticulation is neccessary?
There is a trivial network that displays any set of $t$~trees without
exploiting any common structure (see \cref{fig:trivial-network}).
This network is obtained by simply
\begin{enumerate*}[(1)]
  \item taking the disjoint union of the $t$~trees,
  \item adding a tree ``cap'' that joins the roots of the trees under a common root and, finally,
  \item adding reticulations to ``merge'' all copies of each leaf in the different trees.
\end{enumerate*}
This network has $(t - 1)n$~reticulations and conveys absolutely no information
about the similaries between the $t$~trees.
In general, this is far from the minimum number of reticulations necessary to
display a set of trees: if multiple input trees share subtrees with the same
topology, then these subtrees can be embedded into the same part of the
network, which may allow us to save reticulations.  Indeed, Wu and Zhang \cite{WZ25} proved
that, for three trees, $2n - 2 - \Omega(\lg \lg n)$ reticulations always suffice and,
in general, $(t - 1)(n - 2) - \Omega(\min\{t^{\nicefrac32},n^3\})$ reticulations always suffice.
% I chose not to include the bound for caterpillars, as it seems a very
% particular special case.  I would assume that most trees in the wild are not caterpillars.
Yet, the gap between these bounds and the number of reticulations in the
trivial network is only $o(tn)$. An interesting question then is whether there
exist sets of trees with little enough common structure to make the trivial
network the best possible up to such lower order terms.

% previous work on displaying manyy trees
\Citet{BSS05} answered this question in the affirmative for $t = 2$, by showing
that there are two $n$-caterpillars requiring $n-2$~reticulations.

Recently, \citet{vIJW24} proved that even displaying $3$~caterpillars without
restrictions on the type of network may require $(\nicefrac{3}{2} -
o(1))n$~reticulations. This left open the possibility that the number of
reticulations required to display $t$ trees depends sublinearly on $t$, and
they explicity pose the general case of $t$~trees as an open question. This
paper settles this question by showing that the dependence on $t$ is linear
after all, up to $t \in o(\lg n)$.

Another known special case is $t=(2n-3)!!$, that is, how many reticulations are
necessary to display \emph{all} trees on the same~$n$ leaves? \Citet{BS18}
showed that this number is in $\Theta(n\lg n)$ even for tree-based networks,
but the $\Omega(n\lg n)$ lower bound is derived via a simple counting argument
applied to any type of network.
In this paper, we generalize this counting argument to the case of $t$~trees.
We show that for $t \in o(\lg n)$, there is a set of $t$~trees for which the
trivial network (see \Cref{fig:trivial-network}) is asymptotically optimal, in
the sense that there are $t$~trees that cannot be displayed by a network with
fewer than $(t - 1)n - o(tn)$~reticulations. For~$t \in o\left(\sqrt{\lg n}\right)$, we
obtain the slightly stronger result that there are $t$ trees that cannot be
displayed by a network with fewer than $(t - 1)n - o(n)$ reticulations.
Thus, we conclude that the trivial network is indeed \emph{asymptotically}
optimal in the worst case for displaying $t$~trees.
The proof of these two results also shows that, for any constant $c > 0$, there
is a set of $c\lg n$ trees that cannot be displayed by a network with $o(n
\lg n)$ reticulations.  One way to interpret this result is that ``most'' of
the reticulations in the $\Theta(n \lg n)$-reticulation network of \citet{BS18}
that displays all trees on $n$ leaves are caused by a small subset of only
$O(\lg n)$ trees, and adding the remaining exponentially many trees increases
the number of reticulations necessary to display these trees by only
a constant factor.

As the results by \citet{BS18,vIJW24,BSS05} show, the question of how many
reticulations are necessary to display a set of $t$ trees has been the focus of
previous work and is interesting in its own right.  Our interest in this
question stems from its connection to cluster reduction.  Cluster reduction is
a technique that has been verified experimentally to be the most important
technique for computing maximum agreement forests of two trees
efficiently~\cite{LZ17}.  Before this experimental study, \citet{BSS06} proved
that it can also be used to reduce the complexity of finding a network with the
minimum number of reticulations that displays two trees, owing to the fact that,
for two trees, computing such a network is equivalent to computing a particular
type of agreement forest \cite{BS07}.  We say that cluster reduction is
``safe'' for two trees.  But what happens for more than two trees?  This
question is studied in \cite{IJL+25}. One of the main results of that paper is
that cluster reduction fails to be safe for four or more trees, that is, the
use of cluster reduction may result in computing a network with more than the
minimum number of reticulations.  The proof is via a reduction that starts with
a set of trees that require many reticulations to display and constructs from
it a new set of trees for which cluster reduction fails.  This proof relies
directly on the results shown in this paper.

% ------------------------------------------------------------------------------
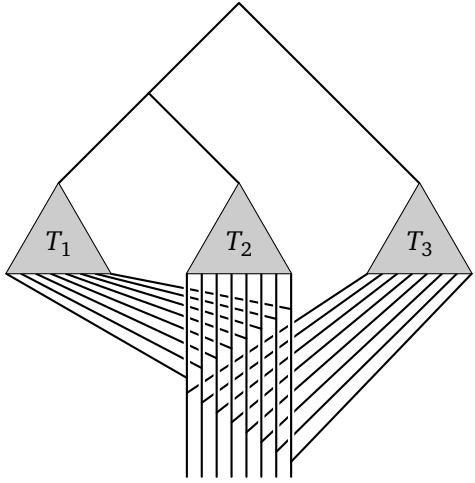
\begin{figure}[t]
  \centering
  \begin{tikzpicture}
    \foreach [count=\p from 1] \x in {a,b,c} {
      \path [network]
        (\p*2.4,0) coordinate (\x1) --
        ++(0:1.4) coordinate (\x8) --
        ++(120:1.4) coordinate (r\x) -- cycle;
      \path ($(\x1)!0.5!(\x8)$) node [anchor=south,yshift=1mm] {$T_{\p}$};
      \path
        (\x1) +(0:0.2pt) coordinate (\x1)
        (\x8) +(180:0.2pt) coordinate (\x8)
        foreach \i in {2,...,7} {
          ($(\x1)!{(\i-1)/7}!(\x8)$) coordinate (\x\i)
        };
    }
    \begin{scope}[overlay]
      \path [name path=a] (a1) -- +(330:6);
      \path [name path=b] (b1) -- +(270:6);
      \path [name intersections={of=a and b}] (intersection-1) coordinate (pa1);
      \path (pa1) ++(270:0.2) coordinate (pc1);
      \path [name path=a] (pc1) -- (c1);
      \path [name path=b] (b8) -- +(270:6);
      \path [name intersections={of=a and b}] (intersection-1) +(90:0.2) coordinate (pa8);
      \path
        (ra) +(330:0.2pt) coordinate (ra)
        (rb) +(210:0.2pt) coordinate (rb)
        (rc) +(210:0.2pt) coordinate (rc);
      \path
        (pa1 -| b8) coordinate (pc8)
        ($(pa8)!2!(pc8)$) ++(270:0.2) coordinate (pc8);
      \path [name path=a] (ra) -- +(45:6);
      \path [name path=b] (rb) -- +(135:6);
      \path [name intersections={of=a and b}] (intersection-1) coordinate (rab);
      \path [name path=b] (rc) -- +(135:6);
      \path [name intersections={of=a and b}] (intersection-1) coordinate (r);
    \end{scope}
    \path
      (pc8) ++(270:0.2) coordinate (l8)
      (b1 |- l8) coordinate (l1)
      foreach \i in {2,...,7} {
        ($(pa1)!{(\i-1)/7}!(pa8)$) coordinate (pa\i)
        ($(pc1)!{(\i-1)/7}!(pc8)$) coordinate (pc\i)
        ($(l1)!{(\i-1)/7}!(l8)$) coordinate (l\i)
      };
    \begin{scope}[on background layer]
      \path [edge]
        (ra) -- (r) -- (rc)
        (rab) -- (rb)
        foreach \i in {1,...,8} {
          (a\i) -- (pa\i)
          (c\i) -- (pc\i)
        };
      \path [cutout]
        foreach \i in {1,...,7} {
          (pa\i) +(90:0.1) -- (b\i)
        }
        foreach \i in {2,...,8} {
          (pa\i) +(270:0.1) -- ([shift=(90:0.1)]pc\i)
        };
      \path [edge]
        foreach \i in {1,...,8} {
          (b\i) -- (l\i)
        };
    \end{scope}
  \end{tikzpicture}
  \caption{The trivial network displaying three trees $T_1$, $T_2$, and $T_3$,
  each with leaf set $[8]$}
  \label{fig:trivial-network}
\end{figure}
% ------------------------------------------------------------------------------$

% ------------------------------------------------------------------------------
\section{Preliminaries}\label{sec:preliminaries}
% ------------------------------------------------------------------------------

\paragraph*{Phylogenetic networks.}
An \emph{unrooted phylogenetic network} $N^{\text{u}}$ (\emph{unrooted network}
for short) is a simple connected undirected graph whose degree-one nodes (called \emph{leaves})
have unique labels and whose non-leaf nodes have degree at least~$3$ and are
unlabelled. If $N^{\text{u}}$ does not contain cycles, then we
call~$N^{\text{u}}$ an unrooted phylogenetic tree (\emph{unrooted tree} for
short). If the maximum degree in $N^{\text{u}}$ is at most three, then we call
$N^{\text{u}}$~\emph{binary}.
A \emph{rooted phylogenetic network}~$N$ (\emph{network} for short) is a
simple connected directed acyclic graph with a single source, called the \emph{root}.
Its out-degree-$0$ nodes, called \emph{leaves}, have unique labels and
its non-leaf, non-root nodes are unlabelled and either have
in-degree~$1$ and out-degree at least~$2$ (\emph{tree nodes}) or
out-degree~$1$ and in-degree at least~$2$ (\emph{reticulations}).
If~$N$ has no reticulations, it is called a \emph{rooted phylogenetic tree}
(\emph{tree} for short).
If both the maximum in-degree and the maximum out-degree of~$N$ are at most two,
then~$N$ is called \emph{binary}.
Throughout this paper, all considered (unrooted) networks are binary.
We use~$V(N)$, $E(N)$, and~$\leaves(N)$ to refer to the nodes, edges, and
leaves of an (unrooted) network~$N$, respectively.
If $(u,v)$ is a directed edge of a network~$N$,
then $u$ is called a \emph{parent} of $v$ in $N$,
and $v$ is called a \emph{child} of $u$ in $N$.
Similarly, any in-edge $(u,v)$ of a node $v$ is called a \emph{parent edge} of $v$,
and any out-edge $(v,w)$ of $v$ is called a \emph{child edge} of~$v$.

\paragraph*{Labellings.}
The labels of the leaves of a tree or network are part of its definition.
Whenever we need to refer to the label of a leaf $v$ explicitly, we use the
notation $\lambda_L(v)$. We assume that all networks (rooted and unrooted) have
their leaves labelled with the integers~$1$~to~$n$, where $n$ is the number of
leaves. We use $[n]$ to refer to the set $\{1, \ldots, n\}$.

For counting networks, it will be useful to also label the edges of networks.
If $N$ is an (undirected) network and $\lambda_E$ is a labelling of its edges,
we call the pair $(N,\lambda_E)$ an (undirected) \emph{edge-labelled network}.
When only a subset of the edges is labelled, we extend this to a labelling of
all edges by assigning the special label $\bot$ to unlabelled edges.
Accordingly, we call the edges with label $\bot$ \emph{unlabelled}. Similarly,
it is convenient to extend the leaf labelling $\lambda_L$ of $N$ to a labelling
of all nodes of $N$, by assigning the label $\bot$ to every non-leaf node.

For two mappings $\phi, \psi : A \to B$ with $\bot \in B$, we say that $\psi$
\emph{extends} $\phi$, or~$\psi$ is an \emph{extension} of~$\phi$, if~$\psi(a)
= \phi(a)$ whenever $\phi(a) \ne \bot$.  In other words, $\psi$ may label more
elements than $\phi$ does but~$\phi$ and~$\psi$ agree on the labels of all
elements that $\phi$ does label.  Viewing $\phi$ and $\psi$ as relations, this
implies that $\phi \setminus (A \times \{\bot\}) \subseteq \psi \setminus (A
\times \{\bot\})$. Therefore, we write $\phi \subseteq \psi$ whenever $\psi$ is
an extension of $\phi$.

\paragraph*{Isomorphisms.}
An \emph{isomorphism} $\phi : G_1 \to G_2$ between directed
graphs~$G_1$ and~$G_2$ is a pair of (set) isomorphisms
$\phi_V : V(G_1) \to V(G_2)$ and $\phi_E : E(G_1) \to E(G_2)$ such that,
for every edge $(u,v) \in E(G_1)$,
% "it holds that" is a direct translation from "es gilt, dass".  Apparently,
% some native speakers have read it enough now that it seems natural to them,
% but picky native speakers still find it unnatural.  Of course, there is the
% argument that it is no different from "it seems that", which is perfectly
% fine, but I certainly cringe every time I read "it holds that" or "We in this
% paper prove ...", which is something Danes love to write for some reason.
$\phi_E((u,v)) = (\phi_V(u), \phi_V(v))$.
The definition for undirected graphs is analogous.
An isomorphism between (undirected) edge-labelled networks~$(N, \lambda_E)$
and~$(N', \lambda_E')$ is an (undirected) graph isomorphism~$\phi: N \to N'$
with the added conditions that~$\lambda_L(v) = \lambda_L'(\phi_V(v))$ for all
$v \in V(N)$, and~$\lambda_E(e) = \lambda_E'(\phi_E(e))$ for all $e \in E(N)$.
Throughout this paper, we consider isomorphic networks to be the same network.

\paragraph{Displaying a tree.}
A \emph{subdivision} of a (rooted or unrooted) tree~$T$ is a graph~$T'$
obtained by replacing each edge~$e\in E(T)$ with a path~$P_e$ consisting of
one or more edges.
The paths in $\{P_e \mid e \in E(T)\}$ are internally
node-disjoint, and their internal nodes are unlabelled.
If~$T$ is rooted, then $P_{(u,v)}$ is a directed path from $u$ to $v$, for all $(u,v) \in E(T)$.
Note that such a subdivision $T'$ is a (rooted or unrooted) tree in the
graph-theoretic sense but not according to our definition of a phylogenetic
tree, as it may have nodes with in-degree~$1$ and out-degree~$1$.

A (rooted or unrooted) network $N$ \emph{displays} a (rooted or unrooted)
tree~$T$ if $N$ has a subgraph $T'$ that is (isomorphic to) a subdivision of $T$.
We call $T'$ an \emph{embedding} of $T$ into $N$.
Note that, in general, $T'$~is not unique, that is,
a tree may have multiple embeddings into a network that displays it.

\paragraph*{Useful (in)equalities.}
We will refer to the base-2 logarithm as $\lg$ or $\lg_2$ in this paper.
Further, we rely on the following equalities and inequalities.

% ------------------------------------------------------------------------------
\begin{obs}\label{obs:double-fac-eq}
  $\displaystyle (2k - 1)!! = \frac{(2k)!}{2^kk!}$.
\end{obs}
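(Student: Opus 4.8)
The plan is to prove the identity by splitting the factorial $(2k)!$ into its odd-indexed and even-indexed factors. Writing $(2k)! = 1 \cdot 2 \cdot 3 \cdots (2k)$, I would group the odd factors $1 \cdot 3 \cdots (2k-1)$ separately from the even factors $2 \cdot 4 \cdots (2k)$. By definition of the double factorial, the product of the odd factors is exactly $(2k-1)!!$.

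Next I would simplify the product of the even factors. Since there are $k$ of them, namely $2 \cdot 1,\ 2 \cdot 2,\ \ldots,\ 2 \cdot k$, factoring a $2$ out of each term gives $2^k \cdot (1 \cdot 2 \cdots k) = 2^k k!$. Combining the two groups yields $(2k)! = (2k-1)!! \cdot 2^k k!$, and dividing both sides by $2^k k!$ (which is nonzero) gives the claimed equality $(2k-1)!! = \frac{(2k)!}{2^k k!}$.

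There is no real obstacle here: the argument is a one-line rearrangement of factors. The only points that merit a word of care are that the regrouping is legitimate because every integer in $\{1, \ldots, 2k\}$ is either odd or even and appears exactly once, and that the boundary case $k = 0$ works since both sides reduce to the empty product $1$. An alternative, if a more mechanical write-up is preferred, is a short induction on $k$: the base case is $k = 0$ (or $k = 1$), and the inductive step multiplies the identity for $k$ by $(2k+1)$ on the double-factorial side, matching $\frac{(2k+2)!}{2^{k+1}(k+1)!} = \frac{(2k)!}{2^k k!} \cdot \frac{(2k+1)(2k+2)}{2(k+1)} = \frac{(2k)!}{2^k k!} \cdot (2k+1)$ on the other side. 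Either route is routine; I would present the direct factor-splitting argument for brevity.
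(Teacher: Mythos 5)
Your argument is correct and is exactly the standard factor-splitting justification that the paper leaves implicit (it states this identity as an observation without proof). Nothing to add.
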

% ------------------------------------------------------------------------------
%

\noindent
The relations $\binom{n}{k} = \frac{n!}{(n - k)!k!}$ and $\binom{n}{k} \le 2^n$
immediately imply that

% ------------------------------------------------------------------------------
\begin{obs}\label{obs:binom-bound}
  $\displaystyle (a + b)! \le 2^{a + b}a!b!$.
\end{obs}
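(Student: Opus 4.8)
The plan is to instantiate the two facts quoted just before the statement at $n = a + b$ and $k = a$. First I would record that $a, b \in \N$, so $n - k = b \ge 0$ and the binomial coefficient $\binom{a+b}{a}$ is well-defined (in the degenerate case $a = 0$ or $b = 0$ the claim reads $(a+b)! \le 2^{a+b}(a+b)!$, which is immediate since $2^{a+b} \ge 1$). In the main case, the identity $\binom{n}{k} = \frac{n!}{(n-k)!\,k!}$ gives $\binom{a+b}{a} = \frac{(a+b)!}{b!\,a!}$, and the bound $\binom{n}{k} \le 2^n$ gives $\frac{(a+b)!}{a!\,b!} \le 2^{a+b}$. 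Multiplying through by $a!\,b! > 0$ yields $(a+b)! \le 2^{a+b}\,a!\,b!$, which is exactly the claimed inequality.

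The only ingredient that might deserve a word of justification is the inequality $\binom{n}{k} \le 2^n$, but this too is standard: $\binom{n}{k}$ is a single nonnegative summand of $\sum_{j=0}^{n}\binom{n}{j} = (1+1)^n = 2^n$. I do not expect any genuine obstacle here — the statement is an immediate consequence of elementary facts and is being recorded only because it will be invoked repeatedly in the counting arguments that follow.
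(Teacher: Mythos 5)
Your proof is correct and follows exactly the route the paper intends: the text immediately preceding the observation cites $\binom{n}{k} = \frac{n!}{(n-k)!\,k!}$ and $\binom{n}{k} \le 2^n$, and instantiating these at $n = a+b$, $k = a$ gives the claim just as you describe. Nothing further is needed.
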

% ------------------------------------------------------------------------------

% ------------------------------------------------------------------------------
\begin{lem}\label{lem:not-quite-e}
  $\displaystyle \frac1e < \left(\frac{n}{n + 1}\right)^n \le \frac12$, for all $n \in \mathbb{N}^+$. 
\end{lem}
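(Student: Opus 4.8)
The plan is to prove the two inequalities separately, treating the upper bound first since it is the more elementary of the two. For the upper bound $\left(\frac{n}{n+1}\right)^n \le \frac12$, I would rewrite it equivalently as $\left(\frac{n+1}{n}\right)^n \ge 2$, i.e.\ $\left(1 + \frac1n\right)^n \ge 2$. This follows immediately from the binomial theorem: expanding $\left(1 + \frac1n\right)^n = \sum_{k=0}^n \binom{n}{k}\frac{1}{n^k}$, the first two terms alone give $1 + n\cdot\frac1n = 2$, and all remaining terms are nonnegative, so the sum is at least $2$. Equality holds exactly when $n = 1$, which is consistent with the non-strict inequality claimed.

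For the lower bound $\left(\frac{n}{n+1}\right)^n > \frac1e$, I would again rewrite it as $\left(1 + \frac1n\right)^n < e$. The cleanest route is to compare with the exponential series: using $1 + x \le e^x$ for all real $x$ (a standard fact), set $x = \frac1n$ to get $\left(1 + \frac1n\right) \le e^{1/n}$, hence $\left(1 + \frac1n\right)^n \le e$. To upgrade this to a strict inequality, note that $1 + x < e^x$ whenever $x \ne 0$, so with $x = \frac1n > 0$ the bound is strict: $\left(1 + \frac1n\right)^n < e$, which rearranges to $\left(\frac{n}{n+1}\right)^n > \frac1e$. If the paper wishes to avoid invoking properties of $e^x$ and keep things self-contained, an alternative is to use the well-known fact that $\left(1 + \frac1n\right)^n$ is strictly increasing in $n$ with limit $e$, so every term is strictly below its supremum $e$; but the $1 + x < e^x$ argument is shorter and equally rigorous.

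I do not anticipate a genuine obstacle here, as both bounds reduce to one-line applications of standard inequalities once the statement is rewritten in the $\left(1 + \frac1n\right)^n$ form. The only point requiring a little care is the strictness in the lower bound: one must make sure to use the strict version of $1 + x < e^x$ (valid for $x \ne 0$) rather than the non-strict version, so that the conclusion is $> \frac1e$ and not merely $\ge \frac1e$. The non-strictness of the upper bound ($\le \frac12$) is genuinely necessary because of the boundary case $n = 1$, so no strict inequality should be attempted there.
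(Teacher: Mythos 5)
Your proof is correct, but it takes a genuinely different route from the paper's. The paper handles both inequalities at once by a monotonicity argument: it evaluates $\left(\frac{n}{n+1}\right)^n$ at $n=1$ (giving exactly $\frac12$), computes the limit $\frac1e$ as $n\to\infty$, and then asserts that the derivative is non-positive, so the sequence decreases from its maximum $\frac12$ towards its infimum $\frac1e$ without ever reaching it. You instead split the two bounds and prove each with a separate standard tool: the binomial theorem for $\left(1+\frac1n\right)^n \ge 2$ (first two terms already give $2$), and the strict inequality $1+x < e^x$ for $x\ne 0$ for $\left(1+\frac1n\right)^n < e$. Your version is arguably more self-contained, since the paper's claim that the derivative is non-positive is stated without justification and is not entirely trivial to verify (it amounts to the well-known but not free fact that $\left(1+\frac1n\right)^n$ is increasing); your two one-line arguments avoid that issue entirely. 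You also correctly identify where strictness is needed (the lower bound) and where it fails (the upper bound at $n=1$), which matches the statement exactly. No gaps.
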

% ------------------------------------------------------------------------------

\begin{proof}
  For $n = 1$, we have $(\nicefrac{n}{n+1})^n = \nicefrac12$.
  For $n\to\infty$, we have
  \[
    \lim_{n \to \infty} (\nicefrac{n}{n+1})^n 
    = \lim_{n \to \infty} (\nicefrac{n}{n+1})^{n+1} \cdot (\nicefrac{n+1}{n}) 
    = \lim_{n \to \infty} (1 - \nicefrac{1}{n+1})^{n+1} \cdot \lim_{n \to \infty} (1 + \nicefrac1n) 
    = \nicefrac 1e \cdot 1.
  \]
  Since the derivative of $(\nicefrac{n}{n+1})^n$ is
  non-positive, for all $n \ge 0$, the lemma follows.
\end{proof}

% ------------------------------------------------------------------------------
\begin{lem}\label{lem:fac-bound}
  $\left(\nicefrac{n}{e}\right)^n < n! < \left(\nicefrac{n}{2}\right)^n$, for all $n \ge 6$.
\end{lem}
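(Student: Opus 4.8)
The plan is to prove both inequalities simultaneously by induction on $n$, with base case $n = 6$, feeding \cref{lem:not-quite-e} into both inductive steps. For the base case one checks directly that $\left(\nicefrac 6e\right)^6 \approx 115.7 < 720 = 6! < 729 = 3^6 = \left(\nicefrac 62\right)^6$.

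For the inductive step of the lower bound, suppose $\left(\nicefrac ne\right)^n < n!$. Then $(n+1)! = (n+1)\cdot n! > (n+1)\left(\nicefrac ne\right)^n$, so it suffices to show $(n+1)\left(\nicefrac ne\right)^n \ge \left(\nicefrac{n+1}{e}\right)^{n+1}$. Clearing denominators (multiplying by $e^{n+1}$) and cancelling a factor of $n+1$, this is equivalent to $e \ge \left(\nicefrac{n+1}{n}\right)^n = \left(1 + \nicefrac 1n\right)^n$, which is exactly the left inequality $\nicefrac 1e < \left(\nicefrac{n}{n+1}\right)^n$ of \cref{lem:not-quite-e} after taking reciprocals of both (positive) sides. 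Hence $(n+1)! > \left(\nicefrac{n+1}{e}\right)^{n+1}$.

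For the inductive step of the upper bound, suppose $n! < \left(\nicefrac n2\right)^n$. Then $(n+1)! = (n+1)\cdot n! < (n+1)\left(\nicefrac n2\right)^n$, so it suffices to show $(n+1)\left(\nicefrac n2\right)^n \le \left(\nicefrac{n+1}{2}\right)^{n+1}$. Again multiplying by $2^{n+1}$ and cancelling $n+1$, this reduces to $2 \le \left(\nicefrac{n+1}{n}\right)^n = \left(1 + \nicefrac 1n\right)^n$, which is the right inequality $\left(\nicefrac{n}{n+1}\right)^n \le \nicefrac 12$ of \cref{lem:not-quite-e} after taking reciprocals. Hence $(n+1)! < \left(\nicefrac{n+1}{2}\right)^{n+1}$, completing the induction.

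The only genuine subtlety is the threshold: for $n \le 5$ the upper bound already fails (e.g.\ $5! = 120 > 97.65\ldots = \left(\nicefrac 52\right)^5$), which is why $n = 6$ is the right starting point, and conveniently the base case also holds with considerable room for the lower bound. (In fact the lower bound holds for every $n \ge 1$ and admits the alternative one-line proof $n! > \nicefrac{n^n}{e^n}$, obtained from the single term $\nicefrac{n^n}{n!}$ of the series $e^n = \sum_{k \ge 0} \nicefrac{n^k}{k!}$; I would keep the inductive version here only for uniformity with the upper bound.) Beyond getting the base case right, there is no real obstacle: both inductive steps collapse to the two halves of \cref{lem:not-quite-e}.
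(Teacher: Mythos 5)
Your proof is correct and, for the upper bound, is exactly the paper's argument: induction from the base case $n=6$ with the inductive step reduced to the inequality $\left(\nicefrac{n}{n+1}\right)^n \le \nicefrac12$ from \cref{lem:not-quite-e}. The only divergence is the lower bound, which the paper dispatches in one line by citing Stirling's bound (noting it holds for all $n\in\mathbb{N}^+$), whereas you give a self-contained inductive proof using the other half of \cref{lem:not-quite-e} (and correctly observe the even shorter argument from the term $\nicefrac{n^n}{n!}$ of the series for $e^n$); both are fine.
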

% ------------------------------------------------------------------------------

\begin{proof}
  The first inequality holds for all~$n\in\mathbb{N}^+$ and follows from Stirling's bound.
  The proof of the second inequality is by induction on~$n$.
  It can be verified that the inequality holds for $n=6$.
  For all~$n \ge 6$,
  \begin{align*}
    (n+1)! 
    &= (n+1) \cdot n! 
    \stackrel{\text{IH}}{<} (n+1) \cdot \frac{n^n}{2^n} 
    = (n+1) \cdot \frac{(n+1)^n}{2^n} \cdot \left(\frac{n}{n+1}\right)^n\\
    &= \frac{(n+1)^{n+1}}{2^{n+1}} \cdot 2\left(\frac{n}{n+1}\right)^n
    \stackrel{\text{Lem.~\ref{lem:not-quite-e}}}{\le}
    \frac{(n+1)^{n+1}}{2^{n+1}}.\qedhere
  \end{align*}
\end{proof}

% ------------------------------------------------------------------------------
\begin{lem}\label{lem:double-fac-bound}
  $(2n-3)!! > \left(\nicefrac{n}{2}\right)^n$, for all $n \ge 5$.
\end{lem}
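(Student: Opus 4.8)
The plan is to prove the bound by induction on~$n$, mirroring the structure of the proof of \cref{lem:fac-bound}. For the base case $n = 5$ one simply computes $(2\cdot 5 - 3)!! = 7!! = 1\cdot 3\cdot 5\cdot 7 = 105$ and $\left(\nicefrac52\right)^5 = \nicefrac{3125}{32} < 98$, so the claim holds; this $n=5$ case is the \emph{tight} one, which is why the lemma is stated only for $n \ge 5$.

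For the inductive step, suppose $(2n - 3)!! > \left(\nicefrac n2\right)^n$ for some $n \ge 5$. Using the defining recurrence $(2(n+1) - 3)!! = (2n - 1)!! = (2n - 1)\cdot(2n - 3)!!$ together with the induction hypothesis, it suffices to show that
\[
  (2n - 1)\cdot\left(\frac n2\right)^n \ge \left(\frac{n+1}{2}\right)^{n+1}.
\]
Clearing denominators and dividing by $n^n$, this is equivalent to $2(2n - 1) \ge (n + 1)\left(1 + \nicefrac1n\right)^n$.

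To finish, I would bound the right-hand side using \cref{lem:not-quite-e}: since $\left(1 + \nicefrac1n\right)^n = 1 \big/ \left(\nicefrac{n}{n+1}\right)^n < e < 3$, we get $(n+1)\left(1 + \nicefrac1n\right)^n < 3(n+1) = 3n + 3$, and $3n + 3 \le 4n - 2 = 2(2n-1)$ holds precisely when $n \ge 5$. This closes the induction.

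The one subtlety worth flagging is that the tempting shortcut --- rewriting $(2n-3)!! = \nicefrac{(2n-2)!}{2^{n-1}(n-1)!}$ via \cref{obs:double-fac-eq} and substituting the bounds of \cref{lem:fac-bound} --- is too lossy: the estimate $m! < \left(\nicefrac m2\right)^m$ throws away a factor exponential in~$m$, which the target $\left(\nicefrac n2\right)^n$ cannot absorb. The induction instead must carry the sharper constant $e$ (rather than $2$) in the per-step ratio $\left(1 + \nicefrac1n\right)^n$, which is exactly the content of \cref{lem:not-quite-e}; everything else is a routine calculation.
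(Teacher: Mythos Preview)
Your proof is correct and follows essentially the same approach as the paper's: both argue by induction with base case $n=5$, reduce the inductive step to comparing $(2n-1)\left(\nicefrac{n}{2}\right)^n$ against $\left(\nicefrac{n+1}{2}\right)^{n+1}$, and close the gap using the bound $\left(1+\nicefrac1n\right)^n < e$ from \cref{lem:not-quite-e} together with the inequality $\nicefrac{2n-1}{n+1} \ge \nicefrac32$ (equivalently, your $3n+3 \le 4n-2$) for $n \ge 5$. The only difference is cosmetic rearrangement of the same algebra.
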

% ------------------------------------------------------------------------------

\begin{proof}
  The proof is by induction on~$n$.
  It can be verified that the claim holds for $n=5$.
  For all~$n \ge 5$,
  \begin{align*}
    (2(n+1)-3)!! = (2n-1)(2n-3)!!
    \stackrel{\text{IH}}{>} (2n-1)\left(\frac{n}{2}\right)^n
    =\frac{2n-1}{n+1}\left(\frac{n+1}{2}\right)^{n+1}\cdot 2\left(\frac{n}{n+1}\right)^n.
  \end{align*}
  Since $\nicefrac{2n-1}{n+1}\geq \nicefrac{3}{2}$ for all $n\geq 5$ and, by \cref{lem:not-quite-e},
  $(\nicefrac{n}{n+1})^n > \nicefrac 1e$ for all $n \in \mathbb{N}^+$,
  the claim follows.
\end{proof}

% ------------------------------------------------------------------------------
\section{Counting Rooted Binary Trees and Networks}
% ------------------------------------------------------------------------------

Our proof that some sets of trees cannot be displayed by networks with few
reticulations uses a simple counting argument.
First, we count the number of sets of $t$~trees on $n$~leaves.
Then, given some integer~$r$, we count how many such sets can be displayed by any one network with at most~$r$~reticulations.
If~$r$ is the smallest integer such that each set of $t$~trees on $n$~leaves can be displayed by some network with $r$~reticulations,
then there is a set of $t$~trees on $n$~leaves that cannot be displayed by a network with fewer than $r$~reticulations.
Our goal in this paper is to obtain lower bounds on this number~$r$.
We present the proof for rooted trees and networks in this section and consider the unrooted case in the next section.

% ------------------------------------------------------------------------------
\begin{prop}\label{prop:num-networks}
  Let~$n,r\in\NN^+$, and
  let~$\mathcal{N}_{n,r}$ be the set of all binary networks
  with $n$~leaves and $r$~reticulations.
  %
  %Then, $|\mathcal{N}_{n,r}|\leq 8^{n+2r}(n+r)!$.
  Then $|\mathcal{N}_{n,r}| \le \frac{(2n + 4r - 3)!!}{r!} \leq n! \cdot (r - 1)! \cdot 2^{2n+6r-3}$.
\end{prop}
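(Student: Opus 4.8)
The plan is to bound $|\mathcal{N}_{n,r}|$ by building every such network through a deterministic sequence of edge-insertion operations, counting the number of choices at each step, and then dividing by the overcounting factor coming from the order in which reticulations are added. Concretely, I would start from the observation (Observation~\ref{obs:double-fac-eq}, and the classical fact behind it) that the number of binary rooted trees on $m$~leaves is $(2m-3)!!$, which can itself be seen by adding leaves $2, 3, \dots, m$ one at a time: when leaf~$i$ is added, it is attached by subdividing one of the edges present, and a tree on $i-1$ leaves has $2(i-1)-2 = 2i-4$ edges plus the root edge, giving $2i-3$ choices, so the product telescopes to $(2m-3)!!$. I would mirror this counting for networks: a binary network with $n$ leaves and $r$ reticulations is obtained from a binary tree by adding $r$ reticulation edges, each of which subdivides two existing edges and directs a new arc between the two subdivision points.

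The key steps, in order: (1)~Fix the underlying ``tree part'': build a binary tree on $n$ leaves, but we must be careful — the standard count is for trees, and after adding reticulation arcs the edge count grows, so it is cleanest to think of the whole object as grown by a sequence of $n-1$ leaf-additions interleaved with (or followed by) $r$ reticulation-additions. I would instead add all $n$ leaves first to get a tree with $2n-3$ insertion choices in total (more precisely $\prod_{i=2}^{n}(2i-3) = (2n-3)!!$ trees), and then add the $r$ reticulations. (2)~When adding the $k$-th reticulation arc to a network that currently has $n$ leaves and $k-1$ reticulations, count its edges: a binary network with $n$ leaves and $j$ reticulations has $n + (n-1) + j$ vertices of the ``tree'' type contributing... rather than recompute, I would just note the edge count is $2n + 3j - 2$ or so and, crucially, that adding a reticulation arc means choosing an ordered-or-unordered pair of edges (possibly the same edge twice is disallowed, or allowed with care) to subdivide, giving at most $\binom{(\text{\#edges})+O(1)}{2} \le (\text{\#edges}+O(1))^2/2$ choices, and the product of these over $k = 1, \dots, r$ should telescope into a ratio of double factorials. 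The targeted clean form is that the product of all insertion choices — $n-1$ leaf insertions and $r$ reticulation insertions — equals $(2n+4r-3)!!$, with each reticulation contributing two consecutive odd factors (since it subdivides two edges) and each leaf contributing one. (3)~Divide by $r!$ to account for the fact that the $r$ reticulation arcs can be added in any of $r!$ orders producing the same network; the leaf order is fixed since leaves are labelled $1, \dots, n$, so no division is needed there. This yields $|\mathcal{N}_{n,r}| \le \frac{(2n+4r-3)!!}{r!}$.

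For the second inequality, I would expand $(2n+4r-3)!! = \frac{(2n+4r-2)!}{2^{n+2r-1}(n+2r-1)!}$ via Observation~\ref{obs:double-fac-eq}, then repeatedly apply Observation~\ref{obs:binom-bound} (i.e.\ $(a+b)! \le 2^{a+b}a!b!$) to split $(2n+4r-2)!$ into a product of $n!$, $(r-1)!$ (or a few factorials that are bounded by these), and powers of~$2$, while using $(n+2r-1)! \ge 1$ and collecting the $2^{\cdot}$ terms. Tracking the exponent of~$2$ carefully should land on $2^{2n+6r-3}$; some slack is available since the claim only asks for an upper bound, so I would not optimize constants.

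The main obstacle I anticipate is step~(2): getting the edge-count bookkeeping exactly right so that the telescoping product over leaf- and reticulation-insertions collapses to precisely $(2n+4r-3)!!$ rather than something merely of the same order. In particular I need to be careful about (a)~whether a reticulation arc may subdivide the same edge twice or two edges incident to a common vertex, (b)~whether the two subdivided edges are chosen as an ordered or unordered pair (the arc has a direction, but swapping the two subdivision points together with reversing... no, the direction is fixed by acyclicity constraints, so this needs thought), and (c)~ensuring the resulting digraph is acyclic and has a single root — not every insertion sequence yields a valid network, which only helps the upper bound but must be acknowledged. The cleanest route is to prove that \emph{every} network in $\mathcal{N}_{n,r}$ arises from \emph{at least} $r!$ distinct insertion sequences (by showing a reticulation arc can always be ``peeled off'' in any order) and that the total number of insertion sequences is \emph{at most} $(2n+4r-3)!!$, which together give the bound without needing the insertion process to be a bijection.
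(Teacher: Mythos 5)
Your plan takes a genuinely different route from the paper: you count networks by growing them through leaf-insertions followed by reticulation-arc insertions and dividing by $r!$ for the insertion order, whereas the paper instead labels the $r$ ``reticulation edges'' left out of a fixed switching with $1,\dots,r$ and builds an explicit injection from these reticulation-labelled networks into $\T_{n+2r}$ (replacing each labelled edge by a pair of new labelled leaves), so that $|\mathcal{N}_{n,r}|\cdot r! \le |\T_{n+2r}| = (2n+4r-3)!!$. Your arithmetic does check out: a binary network with $n$ leaves and $k-1$ reticulations has $2n+3k-5$ edges, so the $k$-th arc contributes at most $(2n+3k-5)(2n+3k-4) \le (2n+4k-5)(2n+4k-3)$ ordered choices, and the product with $(2n-3)!!$ is indeed at most $(2n+4r-3)!!$. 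The closing algebra for the second inequality is the same as the paper's.

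However, the step you yourself flag as the obstacle is a genuine gap, in two places. First, the claim that every network arises from at least $r!$ distinct insertion sequences needs the ``peeling'' argument to go through for \emph{every} ordering of the reticulations, and peeling (deleting one in-edge of a reticulation and suppressing the two degree-two vertices) can produce parallel edges or a non-simple intermediate object, so the intermediate stages leave the class you are counting insertions over; you must either enlarge the class to multigraphs (harmless for an upper bound, but it must be said and the edge counts rechecked) or argue parallel edges cannot arise. Second, and more subtly, you need the $r!$ insertion sequences obtained from the $r!$ peeling orders to be pairwise \emph{distinct as sequences of choices}; if the network admits a nontrivial leaf-label-preserving automorphism exchanging two reticulations, two orders could in principle collapse to the same sequence. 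This is precisely the issue the paper's Claim~\ref{cl:phi-injective} is there to handle (it proves reticulation-labelled networks are rigid, which makes the factor $r!$ exact rather than merely hoped for). Your proposal names these difficulties but does not resolve them, so as written the key inequality $|\mathcal{N}_{n,r}|\cdot r! \le (\text{number of insertion sequences})$ is not established.
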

% ------------------------------------------------------------------------------

\begin{proof}
  A \emph{switching} $\sigma : E(N) \to \{0, \bot\}$ of $N$ is a labelling of
  the edges of $N$ such that every non-root node of $N$ has exactly one
  labelled parent edge. Thus, $N$ has exactly $r$~unlabelled edges, one parent
  edge per reticulation. A~\emph{reticulation labelling} $\lambda_E$ of $N$ is
  an extension of a switching $\sigma$ that bijectively labels all edges left
  unlabelled by $\sigma$ with labels in $[r]$. Note that every switching
  $\sigma$ of $N$ gives rise to $r!$ reticulation labellings corresponding to
  all possible permutations of the $r$ edges of $N$ left unlabelled
  by~$\sigma$.
  A~\emph{reticulation-labelled network} is an edge-labelled network $(N,
  \lambda_E)$, where $N \in \mathcal{N}_{n,r}$ and $\lambda_E$ is a
  reticulation labelling of~$N$.
  Now, for every $N \in \mathcal{N}_{n,r}$,
  choose an arbitrary but fixed switching~$\sigma_{N}$ of $N$, and let
  \[ 
    \netlabnr := \{\NLEnet \mid N \in \mathcal{N}_{n,r}
    \land \lambda_E \text{ is a reticulation labelling of $N$}
    \land \sigma_{N} \subseteq \lambda_E\}
  \]
  where, by convention, two edge-labelled networks are considered equal
  if they are isomorphic.
  Let $\T_m$ be the set of all binary phylogenetic trees with $m$ leaves.
  We prove that
  \begin{align*}
    |\mathcal{N}_{n,r}| \cdot r! \stackrel{\text{(a)}}{=} |\netlabnr| \stackrel{\text{(b)}}{\leq} |\T_{n+2r}|,
  \end{align*}
  implying that $|\mathcal{N}_{n,r}| \le \frac{(2n + 4r - 3)!!}{r!}$,
  as~$|\T_{n+2r}| = (2(n+2r)-3)!!$.
  The bound $\frac{(2n + 4r - 3)!!}{r!} \leq n! \cdot (r - 1)! \cdot 2^{2n+6r-3}$
  is obtained using basic algebra then.

  % ------------------------------------------------------------------------------
  \begin{figure}
    \hspace{\stretch{1}}%
    \subcaptionbox{}{\begin{tikzpicture}[x=8mm,y=8mm]
        \begin{scope}[overlay]
          \path coordinate (1)
            ++(45:1) coordinate (t8)
            ++(45:1) coordinate (t9)
            ++(315:1) coordinate (t7)
            ++(225:1) coordinate (r2)
            ++(270:1) coordinate (t4)
            ++(315:1) coordinate (r3)
            ++(270:1) coordinate (t2)
            ++(225:1) coordinate (r4)
            ++(270:1) coordinate (3)
            ++(0:1) coordinate (4);
          \path [name path=a] (t4) -- +(225:2);
          \path [name path=b] (r4) -- +(135:2);
          \path [name intersections={of=a and b}] (intersection-1) coordinate (t3) ++(225:1) coordinate (2);
          \path [name path=a] (t2) -- +(315:2);
          \path [name path=b] (4) -- +(45:2);
          \path [name intersections={of=a and b}] (intersection-1) coordinate (t1) ++(315:1) coordinate (r1) ++(270:1) coordinate (5);
          \path [name path=a] (t9) -- +(315:6);
          \path [name path=b] (r3) -- +(45:2);
          \path [name intersections={of=a and b}] (intersection-1) coordinate (t6);
          \path [name path=b] (r1) -- +(45:3);
          \path [name intersections={of=a and b}] (intersection-1) coordinate (t5) ++(315:1) coordinate (6);
        \end{scope}
        \path foreach \i in {1,...,6} {
            (\i) node (\i) {\i}
          };
        \path [edge]
          (1) -- (t9) -- (6)
          (2) -- (t4) -- (r2) -- (t7)
          (3) -- (r4) -- (t2) -- (r1) -- (5)
          (t2) -- (r3) -- (t6)
          (t1) -- (4);
        \path [edge0]
          (t8) -- (r2)
          (t4) -- (r3)
          (t3) -- (r4)
          (t5) -- (r1);
        \path
          (r1) to node [anchor=north west,inner sep=2pt] {1} (t5)
          (r2) to node [anchor=north east,inner sep=2pt] {2} (t8)
          (r3) to node [anchor=north east,inner sep=2pt] {3} (t4)
          (r4) to node [anchor=north east,inner sep=2pt] {4} (t3);
    \end{tikzpicture}}%
    \hspace{\stretch{2}}%
    \subcaptionbox{}{\begin{tikzpicture}[x=8mm,y=8mm]
        \path node (1) {1}
          ++(45:1) coordinate (t1)
          +(315:1) node (9) {9}
          ++(45:2) coordinate (t2)
          ++(315:2) coordinate (t3)
          ++(315:3) coordinate (t7)
          ++(315:3) coordinate (t13)
          +(225:1) node (7) {7}
          +(315:1) node (6) {6}
          (t3) ++(225:2) coordinate (t4)
          +(225:1) node (10) {10}
          ++(315:1) coordinate (t5)
          +(315:1) node (11) {11}
          ++(225:2) coordinate (t6)
          +(225:1) node (2) {2}
          +(315:2) node (13) {13}
          (t7) ++(225:2) coordinate (t8)
          +(225:1) node (12) {12}
          ++(315:1) coordinate (t9)
          ++(315:1) coordinate (t11)
          +(225:1) node (4) {4}
          ++(315:1) coordinate (t12)
          +(225:1) node (5) {5}
          +(315:1) node (8) {8}
          (t9) ++(225:2) coordinate (t10)
          +(225:1) node (14) {14}
          +(315:1) node (3) {3};
        \path [edge]
          (1) -- (t2) -- (6)
          (t3) -- (t4) -- (t5) -- (2)
          (t7) -- (t8) -- (t12) -- (5)
          (t9) -- (t10) -- (3)
          (t11) -- (4)
          ;
        \path [edge0]
          (t1) -- (9)
          (t4) -- (10)
          (t5) -- (11)
          (t6) -- (13)
          (t8) -- (12)
          (t13) -- (7)
          (t10) -- (14)
          (t12) -- (8);
        \node (7') at (7) {\phantom{10}};
        \begin{scope}[on background layer]
          \node [pair,fit={(7') (8)}] {};
          \node [pair,fit={(9) (10)}] {};
          \node [pair,fit={(11) (12)}] {};
          \node [pair,fit={(13) (14)}] {};
        \end{scope}
    \end{tikzpicture}}%
    \hspace{\stretch{1}}
    \caption{%
      (a) A reticulation-labelled network $\NLEnet$ with
      $6$~leaves and $4$~reticulations.
      Edges labelled~$0$ are drawn solid.
      Edges labelled~$1$ through~$4$ are drawn dashed.
      (b) The tree~$\tau(\NLEnet)$.
      Pairs of leaves representing the same reticulation
      of~$\NLEnet$ are shaded.
    }
    \label{fig:network-to-tree}
  \end{figure}
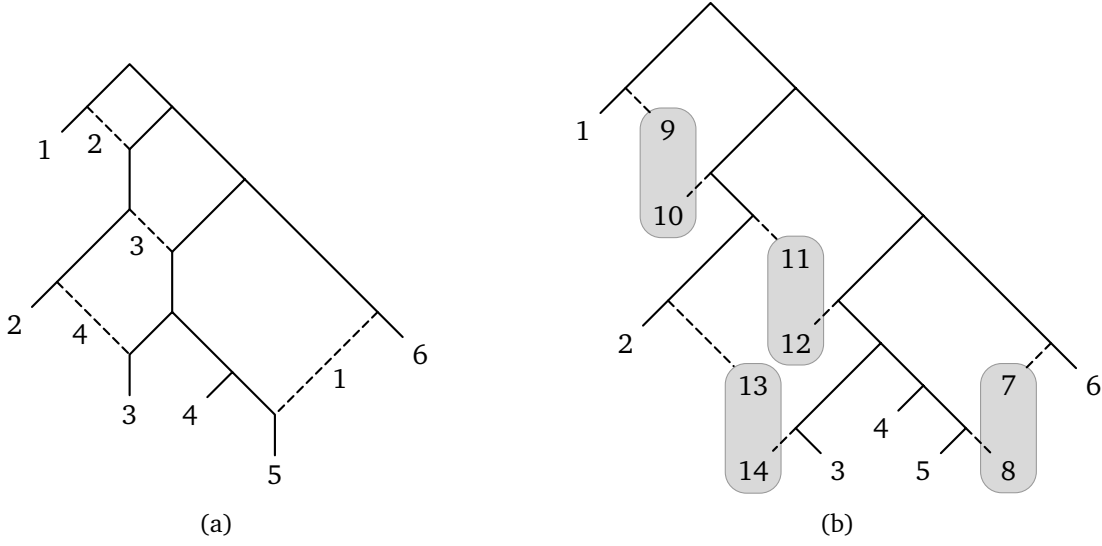
  % ------------------------------------------------------------------------------

  %%fakesection{b}
  \medskip\noindent (b):
  To prove (b),
  %that is, $|\netlabnr| \le |\T_{n+2r}|$,
  we construct an injective mapping~$\tau : \netlabnr \to \T_{n+2r}$.
  For each~$\NLEnet \in \netlabnr$,
  we construct~$\tau(\NLEnet)$ as follows:
  We replace every edge~$e = (u,v)$
  such that~$\lambda_E(e) \ne 0$ with two edges $(u,z)$ and $(v,z')$, where
  $z$ and $z'$ are new leaves.  These leaves are assigned labels~$\lambda_L(z)
  := n + 2\lambda_E(e) - 1$ and $\lambda_L(z') := n + 2\lambda_E(e)$.
  Finally, we drop all edge labels. This construction is illustrated in
  \cref{fig:network-to-tree}.  Since $\tau(\NLEnet)$ is a
  binary phylogenetic tree with $n+2r$ leaves labelled $1$~through~$n+2r$, we
  have $\tau(\NLEnet) \in \T_{n+2r}$.
  This mapping~$\tau$ is injective because the network $\NLEnet$ is easily
  recovered from~$\tau(\NLEnet)$ by reversing the above
  construction:
  We label all edges of $\tau(\NLEnet)$ with label~$0$.
  Then, for all~$h \in [r]$, let $u$ and $v$ be the parents of the
  leaves~$z$ and $z'$ with labels $\lambda_L(z) = n + 2h - 1$ and
  $\lambda_L(z') = n + 2h$, respectively.
  We remove $z$ and $z'$ and their parent edges,
  and add a new edge~$(u, v)$ with label $\lambda_E((u, v)) = h$.
  It is easily verified that this construction indeed recovers
  the edge-labelled network~$\NLEnet$ from $\tau(\NLEnet)$.

  %%fakesection{a}
  \medskip\noindent (a):
  Since we consider isomorphic networks to be the same network in this paper, the
  proof that $|\mathcal{N}_{n,r}| = \frac{|\netlabnr|}{r!}$ needs to
  distinguish between the two interpretations of a
  pair~$\NLEnet$ as simply a pair or as an edge-labelled network.
  If~$\lambda_E \ne \lambda_E'$, then the two
  pairs~$\NLEnet$ and $(N,\lambda_E')$
  are not the same but the two
  edge-labelled networks~$\NLEnet$ and $(N,\lambda_E')$ may be isomorphic and,
  therefore, the same to us.
  To make this distinction explicit, we refer to~$\NLEnet$ as~$\NLEtup$ when
  we view it as simply a pair, and as~$\NLEnet$
  when we view it as an edge-labelled network. Let
  \[
    \tuplabnr := 
    \{\NLEtup \mid
    N \in \mathcal{N}_{n,r}
    \land \lambda_E \text{ is a reticulation labelling of $N$}
    \land \sigma_{N} \subseteq \lambda_E
    \},
  \]
  where two pairs $\NLEtup$ are equal if their components are equal.
  Then $|\mathcal{N}_{n,r}|\cdot r! = |\tuplabnr|$ because,
  as already observed, there are $r!$~reticulation labellings
  extending~$\sigma_{N}$, for each~$N \in \mathcal{N}_{n,r}$.
  Thus, to prove that $|\mathcal{N}_{n,r}|\cdot r! = |\netlabnr|$,
  we need to prove that~$|\netlabnr| = |\tuplabnr|$.
  Since it is clear that $|\netlabnr|\leq |\tuplabnr|$,
  it suffices to show the following:

  \begin{claim}\label{cl:phi-injective}
    If $\NLEnet$ and $\NLEnet[\tilde]$ are isomorphic, then
    $\NLEtup = \NLEtup[\tilde]$.
  \end{claim}
  \begin{claimproof}
    Consider two pairs $\NLEtup, \NLEtup[\tilde] \in \tuplabnr$ such that
    there is an isomorphism~$\phi: \NLEnet \to \NLEnet[\tilde]$.
    Then $\phi$ is also an isomorphism~$\phi: N \to \tilde N$,
    so $N = \tilde N$.
    We show that $\phi$ must be the identity~$1_N$,
    which implies that~$\phi$ maps every edge of~$N$ onto itself and, therefore, that
    $\lambda_E = \tilde\lambda_E$. This in turn implies that $\NLEtup = \NLEtup[\tilde]$.

    Assume for the sake of contradiction that~$\phi \ne 1_N$.
    Since there are no parallel edges in~$N$,
    there is a lowest node~$v \in V(N)$ with~$\phi(v) \ne v$,
    that is, $\phi(w) = w$ holds for all children~$w$ of $v$ in $N$.
    Since~$\phi: N \to N$ is an isomorphism,
    it respects the labels of all leaves in~$N$.
    Therefore, $\phi(x) = x$ for all $x \in \leaves(N)$,
    so~$v$ is not a leaf of~$N$.
    Let~$w$ be a child of~$v$.
    If~$w$ has a single parent edge~$e$, then $\phi(e) = e$, since $\phi(w) = w$ and $\phi$ is an isomorphism.
    If~$w$ has two parent edges~$e_1$ and~$e_2$, then~$\lambda_E(e_1) \ne \lambda_E(e_2)$, since~$\lambda_E$ is a reticulation labelling
    (one of~$\lambda_E(e_1)$ and~$\lambda_E(e_2)$ is zero; the other, non-zero).
    Since~$\phi(w) = w$ and $\phi$~respects edge labels, $\phi(e_1) = e_1$ and~$\phi(e_2) = e_2$.
    This shows that no matter whether $w$ has one or two parents, we have $\phi(e) = e$,
    for every parent edge~$e$ of $w$.
    If we choose $e$ to be the parent edge whose top endpoint is $v$,
    then this implies that $\phi(v) = v$, contradicting our assumption.
    Therefore, $\phi = 1_N$ and $\NLEtup = \NLEtup[\tilde]$.
  \end{claimproof}

  \medskip\noindent
  Having shown (a) and (b), the proposition follows from
  \begin{align*}
    |\mathcal{N}_{n,r}|
    &\stackrel{\text{(a),(b)}}{\le} \frac{|\T_{n+2r}|}{r!} = \frac{(2n + 4r - 3)!!}{r!}\\
    &\stackrel{\text{Obs.~\ref{obs:double-fac-eq}}}{=} \frac{(2n + 4r - 2)!}{2^{n + 2r - 1}(n + 2r - 1)!r!}
    \stackrel{\text{Obs.~\ref{obs:binom-bound}}}{\le} \frac{2^{2n + 4r - 2}((n + 2r - 1)!)^2}{2^{n + 2r - 1}(n + 2r - 1)!r!}
    = \frac{2^{n + 2r - 1}(n + 2r - 1)!}{r!}\\
    &\stackrel{\text{Obs.~\ref{obs:binom-bound}}}{\le} \frac{2^{2n + 4r - 2}n!(2r - 1)!}{r!}
    \stackrel{\text{Obs.~\ref{obs:binom-bound}}}{\le} \frac{2^{2n + 6r - 3}n!r!(r - 1)!}{r!}
    = 2^{2n + 6r - 3}n!(r - 1)!.\qedhere
  \end{align*}
\end{proof}

Since each $n$-leaf network with $r$~reticulations displays at most $2^r$
trees, each such network displays at most $\binom{2^r}{t}\leq 2^{rt}$ sets
of $t$ trees.  Thus, we immediately obtain the following corollary.

% ------------------------------------------------------------------------------
\begin{cor}\label{cor:num-net-tree-pairs}
  Let~$n,t,r\in\NN^+$, and
  let~$\mathcal{M}_{n,t,r}$ be the set of all pairs~$(N,\T)$ such that
  $\T$~is a set of $t$~binary trees on $n$~leaves, and
  $N$~is a binary network with $n$~leaves and $r$~reticulations displaying all trees in~$\T$.
  Then $|\mathcal{M}_{n,t,r}| \leq 2^{rt}\cdot n! \cdot (r - 1)! \cdot 2^{2n+6r-3}$.
\end{cor}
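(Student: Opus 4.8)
The plan is to prove the bound by the multiplication principle: every pair $(N,\T)\in\mathcal{M}_{n,t,r}$ is completely specified by first choosing the network~$N$ and then choosing the set~$\T$ among the sets of $t$ trees that~$N$ displays. Hence $|\mathcal{M}_{n,t,r}|$ is at most the number of admissible networks times the maximum, over a single such network, of the number of $t$-element sets of trees it displays. The first factor is handed to us directly by \cref{prop:num-networks}: there are at most $n!\cdot(r-1)!\cdot 2^{2n+6r-3}$ binary networks with $n$~leaves and $r$~reticulations. So the only thing left is to bound the second factor.

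For the second factor I would argue that a binary network $N$ with $r$~reticulations displays at most $2^r$ trees. Indeed, in a binary network every reticulation has in-degree exactly~$2$; a \emph{switching} of $N$ chooses, for each reticulation, one of its two incoming edges to keep and deletes the other. There are exactly $2^r$ switchings, and for each switching, deleting the now-unreachable part of $N$ and suppressing every remaining node of in- and out-degree~$1$ yields a tree on the leaf set $[n]$. Every tree $T$ displayed by $N$ arises from at least one switching in this way, since an embedding of $T$ into $N$ selects, for each reticulation it passes through, a unique incoming edge. Therefore the number of distinct trees displayed by $N$ is at most $2^r$, and the number of $t$-element subsets of the set of trees displayed by $N$ is at most $\binom{2^r}{t}\le (2^r)^t=2^{rt}$ (and this bound is trivially also valid when $2^r<t$, in which case there are no such subsets).

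Combining the two bounds gives
\[
  |\mathcal{M}_{n,t,r}|\le 2^{rt}\cdot n!\cdot(r-1)!\cdot 2^{2n+6r-3},
\]
as claimed. I do not expect any real obstacle here: the substantive counting is already carried out in \cref{prop:num-networks}, and the remainder is the standard observation that switchings exhaust the displayed trees. The only point that warrants a word of care is that \cref{prop:num-networks} counts \emph{binary} networks and that the $2^r$ bound likewise relies on binarity, so the corollary is to be read with binary networks in mind (matching the convention used throughout this section).
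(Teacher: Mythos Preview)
Your proposal is correct and follows essentially the same route as the paper: multiply the bound on $|\mathcal{N}_{n,r}|$ from \cref{prop:num-networks} by the bound $\binom{2^r}{t}\le 2^{rt}$ on the number of $t$-sets of trees displayed by a single binary network. The paper merely asserts the $2^r$ bound on displayed trees in one sentence, whereas you spell out the switching argument; your remark about binarity is also apt.
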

% ------------------------------------------------------------------------------

% ------------------------------------------------------------------------------
\begin{obs}\label{obs:num-trees}
  Let~$n,t\in\NN^+$ with $n\geq 5$, and
  let~$\S_{n,t}$ be the set of all
  sets of $t$ binary trees on the leaf set~$[n]$.
  Then
  \begin{equation*}
    |\S_{n,t}|
    = \binom{(2n-3)!!}{t}
    \geq \left(\frac{(2n - 3)!!}{t}\right)^t
    \stackrel{\text{Lem.~\ref{lem:double-fac-bound}}}{>} \left(\frac{n}{2}\right)^{tn}\cdot t^{-t}.
  \end{equation*}
\end{obs}
% ------------------------------------------------------------------------------

% ------------------------------------------------------------------------------
\begin{thm}\label{thm:r-bound}
  Let $n,t\in\NN^+$ such that $n\geq 6$, and
  let~$r$ be the smallest integer such that
  every set of $t$~binary trees on the leaf set~$[n]$ is displayed by
  some binary network with $r$~reticulations.
  Then
  \begin{equation*}
    r > \frac{(t-1)n \lg_2 n - 6tn - t\lg_2 t}{\lg_2 n + t + \lg_2 t} \in (t-1)n - O\left(\frac{nt^2}{\lg n + t}\right).
  \end{equation*}
\end{thm}
% ------------------------------------------------------------------------------

\begin{proof}
  Since every set of $t$~trees on the leaf set~$[n]$
  is displayed by a network with $(t - 1)n$~reticulations (see \cref{sec:intro}), we have $r \le (t - 1)n$.
  By the choice of $r$, each tree set $\T \in \S_{n,t}$ has a network~$N$ with $r$ reticulations
  such that $(N, \T) \in \mathcal{M}_{n,t,r}$.
  Therefore, $|\S_{n,t}|\leq |\mathcal{M}_{n,t,r}|$, so
  %Since this is true for all $\T \in \S_{n,t}$, we have
  \begin{align*}
    \left(\frac{n}{2}\right)^{tn} \cdot t^{-t}
    \stackrel{\text{Obs.~\ref{obs:num-trees}}}{<} |\S_{n,t}|
    \leq |\mathcal{M}_{n,t,r}|
    \stackrel{\text{Cor.~\ref{cor:num-net-tree-pairs}}}{\leq} 2^{rt}\cdot n!\cdot (r - 1)!\cdot 2^{2n+6r-3}.
  \end{align*}
  By taking the base-2 logarithms on both sides, we obtain that
  \begin{align*}
    tn \lg n - tn - t\lg t 
    & < rt + \lg n! + \lg (r - 1)! + 2n + 6r - 3\\
    & \stackrel{\mathclap{\text{Lem.~\ref{lem:fac-bound}}}}{<} rt + n\lg n + r\lg r + n + 5r - 3\\
    & = r(t + \lg r + 5) + n(\lg n + 1) - 3\\
    &\stackrel{\mathclap{\vrule width 0pt depth 3pt n+r\leq tn}}{<} r(t + \lg tn) + n(\lg n + 5t).
  \end{align*}
  Thus,
  \begin{align*}
    r 
    & > \frac{tn\lg n - tn - t\lg t - n\lg n - 5tn}{\lg n + t + \lg t}
    = \frac{(t-1)n\lg n - 6tn - t\lg t}{\lg n + t + \lg t}\\
    & =(t-1)n - \frac{(t-1)n(t+\lg t) + 6tn + t\lg t}{\lg n + t + \lg t}
    \in (t-1)n - O\left(\frac{nt^2}{\lg n + t}\right).
    \qedhere
  \end{align*}
\end{proof}

% ------------------------------------------------------------------------------
\begin{cor}\label{cor:t-const}
  For all $t\in o\left(\sqrt{\lg n}\right)$, there is a set $\T$ of $t$ binary trees
  on the leaf set $[n]$ such that every binary network~$N$ that displays the trees in
  $\T$ has $(t-1)n - o(n)$ reticulations.
\end{cor}
% ------------------------------------------------------------------------------

% ------------------------------------------------------------------------------
\begin{cor}\label{cor:t-below-log}
  For all $t\in o(\lg n)$, there is a set $\T$ of $t$~binary trees
  on the leaf set $[n]$ such that every binary network $N$ that displays the trees in
  $\T$ has $(t-1)n - o(tn)$ reticulations.
\end{cor}
% ------------------------------------------------------------------------------

% ------------------------------------------------------------------------------
\begin{cor}\label{cor:t-log}
  Let~$t = c\lg_2 n$, for some constant~$c>0$.
  Then there is a set $\T$ of $t$~binary trees on the leaf set~$[n]$ such
  that every binary network~$N$ that displays the trees in $\T$ has
  \begin{equation*}
    r > \frac{(c\lg_2 n - 1)n \lg_2 n - O(n\lg n)}{(c+1)\lg_2 n + o(\lg n)} 
    \in \frac{c}{c+1+o(1)}n\lg_2 n - O(n)
  \end{equation*}
  reticulations.  In particular, as $c$ increases, $r$ approaches $n\lg_2 n$.
\end{cor}
% ------------------------------------------------------------------------------

% ------------------------------------------------------------------------------
\section{Counting Unrooted Binary Trees and Networks}
% ------------------------------------------------------------------------------

The argument in the previous section applies to rooted trees but generalizes to unrooted trees fairly naturally.
In the unrooted case, there is no concept of reticulations, but we can define the reticulation number
of a network~$\unrooted{N} = (V,E)$ as~$r(\unrooted{N}) := |E| - |V| + 1$, that is,
as the number of edges that need to be deleted from~$\unrooted{N}$ to obtain an unrooted tree.
This mirrors the rooted case in the sense that any rooting of an unrooted network~$\unrooted{N}$ has $r(\unrooted{N})$~reticulations.

The proof for rooted networks/trees had three main parts:
\begin{enumerate*}[(a)]
  \item bounding the number of networks~$N$ with reticulation number~$r$,
  \item bounding the number of pairs~$(N,\T)$ such that $\T$ is a set of $t$ trees on $n$ leaves and $N$ is a network with $r$ reticulations that displays the trees in $\T$, and
  \item bounding the number of sets of $t$ trees on the leaf set $[n]$.
\end{enumerate*}
Since there are $(2n - 5)!!$ unrooted trees with leaf set $[n]$, the number of
sets of $t$ unrooted trees on the leaf set $[n]$ is easy to bound:

% ------------------------------------------------------------------------------
\begin{obs}\label{obs:u-num-trees}
  Let~$n,t\in\NN^+$ with $n\geq 6$, and
  let~$\unrooted{\S}_{n,t}$ be the set of all
  sets of $t$ unrooted binary trees on the leaf set~$[n]$.
  Then
  \begin{equation*}
    |\unrooted{\S}_{n,t}|
    = \binom{(2n-5)!!}{t}
    \geq \left(\frac{(2n - 5)!!}{t}\right)^t
    \stackrel{\text{Lem.~\ref{lem:double-fac-bound}}}{>} \left(\frac{n - 1}{2}\right)^{t(n - 1)}\cdot t^{-t}.
  \end{equation*}
\end{obs}
% ------------------------------------------------------------------------------

Next, we bound the number of unrooted networks with $n$ leaves and reticulation
number~$r$. Such a bound is harder to obtain than for rooted networks, as the
definition of a (binary) unrooted network given in the preliminaries includes
all $3$-regular graphs, which clearly do not display any trees.  More
generally, a network may have pendant $2$-edge-connected components without
adjacent leaves.  These components have no bearing on the set of trees
displayed by the network but make it impossible to transfer one of the key
ideas in the proof of \cref{prop:num-networks} to unrooted networks: that the
only isomorphism between two reticulation-labelled networks is the identity.
For the purpose of this paper, we can restrict our attention to a subclass of
unrooted networks, which we call \emph{leaf-connecting}. A leaf-connecting
unrooted network has the property that every edge lies on a path between two
leaves.  It is easy to see that, if a set of unrooted trees $\unrooted{\T}$ is
displayed by an unrooted network $\unrooted{N}$, then the subnetwork of
$\unrooted{N}$ consisting of all edges contained in the embedding of at least
one tree in $\unrooted{\T}$ displays the trees in $\unrooted{\T}$, is
leaf-connecting, and has at most the same reticulation number as
$\unrooted{N}$.  Thus, a set of unrooted trees is displayed by an unrooted
network with reticulation number $r$ if and only if it is displayed by a
leaf-connecting unrooted network with reticulation number at most~$r$.

% ------------------------------------------------------------------------------
\begin{prop}\label{prop:u-num-networks}
  Let~$n,r\in\NN^+$, and
  let~$\unrooted{\N}_{n,r}$ be the set of all leaf-connecting unrooted
  networks with $n$~leaves and reticulation number~$r$.
  Then $|\unrooted{\N}_{n,r}| \le \frac{(2n + 4r - 5)!!}{r!} \leq n! \cdot (r - 2)! \cdot 2^{2n+6r-6}$.
\end{prop}
% ------------------------------------------------------------------------------

\begin{proof}
  The proof follows the same approach as in the rooted case, with adjustments
  to deal with the absence of edge directions.

  For an unrooted network $\unrooted{N} \in \unrooted{\N}_{n,r}$,
  a \emph{switching} of $\unrooted{N}$ is a labelling $\sigma : E(\unrooted{N}) \to \{0, \bot\}$
  such that the labelled edges form a spanning tree of $\unrooted{N}$. This means that
  there are $r$ unlabelled edges.  As in the rooted case, a~\emph{reticulation
  labelling} $\lambda_E$ of $\unrooted{N}$ is an extension of a switching
  $\sigma$ of $\unrooted{N}$ that bijectively labels all edges left
  unlabelled by $\sigma$ with labels in $[r]$, and a reticulation-labelled
  network is a pair $\uNLEnet$, where $\lambda_E$ is a
  reticulation labelling.
  Also as in the rooted case,
  we fix a particular switching~$\sigma_{\unrooted{N}}$ of $\unrooted{N}$,
  for each~$\unrooted{N} \in \unrooted{\N}_{n,r}$,
  we define~$\unrooted{\T}_m$ to be the set of unrooted trees with $m$~leaves, and
  we define the sets
  \begin{align*}
    \unetlabnr
    & := \{\uNLEnet \mid
      \unrooted{N} \in \unrooted{\N}_{n,r}
      \land \lambda_E \text{ is a reticulation labelling of $\unrooted{N}$}
    \land \sigma_{\unrooted{N}} \subseteq \lambda_E\} \text{ and}\\
    \utuplabnr
    & := \{\uNLEtup \mid
      \unrooted{N} \in \unrooted{\N}_{n,r}
      \land \lambda_E \text{ is a reticulation labelling of $\unrooted{N}$}
    \land \sigma_{\unrooted{N}}\subseteq \lambda_E\}.
  \end{align*}
  Analogously to the proof of \cref{prop:num-networks}, we show that
  \[
    |\unrooted{\N}_{n,r}| \cdot r!
    = |\utuplabnr|
    \stackrel{\text{(a)}}{=} |\unetlabnr|
    \stackrel{\text{(b)}}{\le} |\unrooted{\T}_{n + 2r}|.
  \]
  Again, it is obvious that $|\unrooted{\N}_{n,r}| \cdot r! =
  |\utuplabnr|$ because every switching
  $\sigma_{\unrooted{N}}$ has $r!$ extensions that are reticulation
  labellings.

  \medskip\noindent (b):
  The proof that $|\unetlabnr| \le |\unrooted{\T}_{n + 2r}|$
  uses essentially the same
  mapping~$\tau: \unetlabnr \to \unrooted{\T}_{n + 2r}$ as in the
  rooted case, replacing every edge $e = \{u, v\} \in E(\unrooted{N})$ with label
  $\lambda_E(e) > 0$ with two edges $\{u, z\}$ and $\{v, z'\}$, where $z$ and
  $z'$ are two new leaves labelled $n + 2\lambda_E(e) - 1$ and $n +
  2\lambda_E(e)$, respectively. Since it is once again easy to verify that each
  reticulation-labelled network $\uNLEnet$ can be recovered from
  its image $\tau(\uNLEnet)$, this mapping $\tau$ is injective,
  and we have $|\unetlabnr| \le |\unrooted{\T}_{n + 2r}|$.

  \medskip\noindent (a):
  To show that $|\unetlabnr| = |\utuplabnr|$, we
  prove that the existence of an isomorphism~$\phi: \NLEnet \to
  \NLEnet[\tilde]$ between two reticulation-labelled unrooted
  networks~$\NLEnet$ and $\NLEnet[\tilde]$ in $\unetlabnr$ implies
  that $\NLEtup = \NLEtup[\tilde]$. As in the rooted case, $\phi$ is also an
  isomorphism from $N$ to $\tilde N$, so $N = \tilde N$, but proving that this
  implies that $\phi = 1_N$ requires more care than in the rooted case.

  Since $\phi$ is an isomorphism from~$N$ to itself, it respects the labels of
  all leaves, so~$\phi(x) = x$ for all~$x \in \leaves(N)$.
  Now, assume for the sake of contradiction that $\phi \ne 1_N$.
  As there are no parallel edges in $N$ and $N$ is leaf-connecting,
  there is a path~$P$ between two leaves~$x$ and~$y$ in~$N$ that contains an
  edge~$\{u, v\}$ with~$\phi(u) = u$ and~$v' = \phi(v) \ne v$.
  Note that this implies that~$\phi(\{u, v\}) = \{u, v'\}$ and that
  neither~$v$ nor~$v'$ is a leaf of~$N$.

  Without loss of generality, suppose that~$v$ is on the subpath~$Q$ of~$P$ from $u$ to $y$.
  Then $Q' := \phi(Q)$ is also a path from $u = \phi(u)$ to $y = \phi(y)$.
  Note that, for every edge~$e \in Q$
  with $\lambda_E(e) \ne 0$, we have $\phi(e) = e$ because~$e$
  is the only edge with label $\lambda_E(e)$ and $\phi$ respects edge labels.
  Thus, all edges in the symmetric difference~$Q \oplus Q'$ of the edge sets of~$Q$ and~$Q'$
  are labelled~$0$ by $\lambda_E$.
  Moreover, $Q \oplus Q'$ is non-empty because it contains the edges~$\{u, v\}$
  and~$\{u, v'\}$, and every node is easily seen to have even degree in $Q \oplus Q'$.
  As every node in $N$ has degree at most three, this implies that $Q \oplus Q'$ is
  a non-empty collection of simple cycles all of whose edges are labelled~$0$ by $\lambda_E$,
  contradicting the fact that the edges labelled~$0$ by $\lambda_E$ form a spanning tree of $N$.
  Having obtained the desired contradiction, we conclude that $\phi = 1_N$ and, therefore, that
  $\NLEtup = \NLEtup[\tilde]$.

  \medskip\noindent
  Having shown (a)~and~(b), we conclude that
  \begin{align*}
    |\unrooted{\N}_{n,r}| 
    & \stackrel{\text{(a),(b)}}{\le}\frac{|\unrooted{\T}_{n+2r}|}{r!} = \frac{(2n + 4r - 5)!!}{r!}\\
    &\stackrel{\text{Obs.~\ref{obs:double-fac-eq}}}{=} \frac{(2n + 4r - 4)!}{2^{n + 2r - 2}(n + 2r - 2)!r!}
    \stackrel{\text{Obs.~\ref{obs:binom-bound}}}{\le} \frac{2^{2n + 4r - 4}((n + 2r - 2)!)^2}{2^{n + 2r - 2}(n + 2r - 2)!r!}
    = \frac{2^{n + 2r - 2}(n + 2r - 2)!}{r!}\\
    &\stackrel{\text{Obs.~\ref{obs:binom-bound}}}{\le} \frac{2^{2n + 4r - 4}n!(2r - 2)!}{r!}
    \stackrel{\text{Obs.~\ref{obs:binom-bound}}}{\le} \frac{2^{2n + 6r - 6}n!r!(r - 2)!}{r!}
    = 2^{2n + 6r - 6}n!(r - 2)!.\qedhere
  \end{align*}
\end{proof}

In addition to the bound on the number of networks with reticulation number~$r$
established by \cref{prop:u-num-networks}, we need the following bound on the
number of trees displayed by each of them:

\begin{prop}\label{prop:u-num-trees-per-net}
  Let~$\unrooted{N}\in\unrooted{\N}_{n,r}$ be a binary unrooted network with $n$~leaves and reticulation number~$r$.
  Then $\unrooted{N}$ displays at most $\binom{n + 3r - 3}{r} \le 2^{n + 3r - 3}$ binary unrooted trees with $n$~leaves.
\end{prop}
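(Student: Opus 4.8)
The plan is to reduce the number of displayed trees to the number of spanning trees of the ``internal'' part of $N$ (the pendant edges being forced into every embedding), and then to bound that by a binomial coefficient.

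\emph{Setup.} Since $N$ is binary and unrooted, every non-leaf vertex has degree exactly~$3$. Writing $m$ for the number of non-leaf vertices, a degree count gives $2|E(N)| = n + 3m$, and together with $r = |E(N)| - |V(N)| + 1$ this yields $m = n + 2r - 2$ and $|E(N)| = 2n + 3r - 3$. Assuming $n\ge 3$ (the cases $n\le 2$ are trivial), no edge joins two leaves, so exactly $n$ edges are \emph{pendant} (incident to a leaf) and the remaining $n + 3r - 3$ are \emph{internal}. Let $H := N - \leaves(N)$, the graph on the $m$ non-leaf vertices whose edges are the internal edges; deleting degree-one vertices preserves connectivity, so $H$ is connected and has cycle rank $|E(H)| - |V(H)| + 1 = r$. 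Let $A \subseteq V(H)$ be the set of non-leaf vertices of $N$ adjacent to a leaf.

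\emph{Reduction.} Every tree displayed by $N$ is witnessed by an embedding, i.e.\ a subtree $T'$ of $N$ isomorphic to a subdivision of it; since subdivision creates no new leaves, the degree-one vertices of $T'$ are precisely the $n$ leaves of $N$, so $T'$ contains every pendant edge and is therefore determined by $T'_{\mathrm{int}} := T' - \leaves(N)$, a subtree of $H$. Moreover $T'_{\mathrm{int}}$ contains $A$, and each leaf of $T'_{\mathrm{int}}$ lies in $A$: such a vertex meets in $T'$ exactly one internal edge and at least one pendant edge (otherwise it would be an unlabelled leaf of $T'$), hence is adjacent to a leaf of $N$. Call a subtree of $H$ that contains $A$ and all of whose leaves lie in $A$ an \emph{$A$-subtree}. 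Then $T' \mapsto T'_{\mathrm{int}}$ is injective, so the number of trees displayed by $N$ is at most the number of $A$-subtrees of $H$.

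\emph{Bounding the $A$-subtrees.} I would show each $A$-subtree of $H$ is the unique $A$-subtree of some spanning tree of $H$. First, in \emph{any} tree $\Sigma$ with $A \subseteq V(\Sigma)$ there is a unique $A$-subtree, namely the minimal subtree $M$ of $\Sigma$ spanning $A$: every $A$-subtree contains $M$, and a tree that contained $M$ properly would have a leaf outside $V(M) \supseteq A$, which is forbidden. Second, since $H$ is connected, every $A$-subtree $T^*$ extends to a spanning tree $\Sigma \supseteq T^*$ of $H$, and then $T^*$ is exactly the unique $A$-subtree of $\Sigma$; hence the map ``spanning tree $\mapsto$ its $A$-subtree'' is a surjection from spanning trees of $H$ onto $A$-subtrees of $H$. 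Finally, a spanning tree of $H$ is obtained by deleting exactly $|E(H)| - (|V(H)| - 1) = r$ of the $n + 3r - 3$ internal edges, so $H$ has at most $\binom{n + 3r - 3}{r}$ spanning trees. Chaining the inequalities gives the claim, and $\binom{n+3r-3}{r} \le 2^{n+3r-3}$ is standard.

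\emph{Main obstacle.} The one non-routine step is the middle one: choosing the right object to count — $A$-subtrees of the internal graph $H$, so that the $n$ pendant edges are ``free'' (counting spanning subtrees of $N$ directly would only give $\binom{2n+3r-3}{r}$) — and verifying that an $A$-subtree is recovered as the $A$-core of some spanning tree, with that core unique inside any given tree. Some care is also needed for small $n$ and for non-leaf vertices of $N$ incident to two leaves (whose degree in $H$ drops below~$3$); neither affects the argument but both warrant a remark.
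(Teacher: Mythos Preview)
Your proof is correct and rests on the same idea as the paper's: the number of displayed trees is bounded by the number of spanning trees, and since the $n$ pendant edges lie in every spanning tree, only the $n+3r-3$ internal edges are in play, giving $\binom{n+3r-3}{r}$. The paper, however, gets there more directly: it counts spanning trees of $N$ itself (not of $H$), observes that each spanning tree of $N$ contains a subdivision of exactly one phylogenetic tree, and then notes that leaf-incident edges can never be among the $r$ deleted edges. Your detour through $A$-subtrees of the internal graph $H$ works, but the ``main obstacle'' you flag---that counting spanning trees of $N$ directly would only yield $\binom{2n+3r-3}{r}$---is not an obstacle at all: the pendant-edge observation already sharpens that count to $\binom{n+3r-3}{r}$ without passing to $H$ or introducing $A$-subtrees.
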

\begin{proof}
  For every tree~$\unrooted{T}$ displayed by~$\unrooted{N}$, there is a spanning tree of $\unrooted{N}$ that has a
  subdivision of $\unrooted{T}$ as a subtree.
  Conversely, for every spanning tree~$\unrooted{S}$ of~$\unrooted{N}$,
  there is exactly one tree displayed by~$\unrooted{N}$ such that $\unrooted{S}$ has a subdivision
  of this tree as a subtree.
  Thus, the number of unrooted trees displayed by~$\unrooted{N}$ is
  at most the number of spanning trees of $\unrooted{N}$.
  We show that this number is at most $2^{n + 3r - 3}$.
  
  As $\unrooted{N}$ is an unrooted binary network with $n$ leaves and reticulation number~$r$,
  we know that~$\unrooted{N}$ has exactly $2n + 3r - 3$~edges, and every spanning tree of $\unrooted{N}$ is
  obtained by deleting exactly~$r$ of them.
  Further, exactly $n$ edges of $\unrooted{N}$ are incident to leaves and are, therefore,
  in every spanning tree of $\unrooted{N}$.
  Thus, $\unrooted{N}$ has at most $\binom{n + 3r - 3}{r} \le 2^{n + 3r - 3}$ spanning trees.
\end{proof}

\Cref{prop:u-num-trees-per-net} immediately implies that every unrooted network
$\unrooted{N}$ displays at most $\binom{2^{n+3r-3}}{t} \le 2^{t(n+3r-3)}$ sets
of $t$ unrooted trees with leaf set $[n]$. Together with
\cref{prop:u-num-networks}, this proves the following corollary:

% ------------------------------------------------------------------------------
\begin{cor}\label{cor:u-num-net-tree-pairs}
  Let~$n,t,r\in\NN^+$ and
  let~$\unrooted{\M}_{n,t,r}$ be the set of all pairs~$(\unrooted{N},\unrooted{\T})$ such that
  $\unrooted{\T}$~is a set of $t$~binary unrooted trees on $n$~leaves and
  $\unrooted{N}$~is a leaf-connecting binary unrooted network with reticulation number~$r$
  that displays all unrooted trees in~$\unrooted{\T}$.
  Then $|\unrooted{\M}_{n,t,r}|\leq 2^{(t + 2)(n + 3r - 3)}\cdot n! \cdot (r - 2)!$.
\end{cor}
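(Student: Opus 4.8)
The plan is to combine \cref{prop:u-num-networks} and \cref{prop:u-num-trees-per-net} by the same bookkeeping argument used for \cref{cor:num-net-tree-pairs} in the rooted case. Every pair in $\mathcal{M}^\text{u}_{n,t,r}$ is determined by first choosing the leaf-connecting unrooted network $N^\text{u}$ (with $n$ leaves and reticulation number $r$) and then choosing a set of $t$ trees among those it displays. So I would bound $|\mathcal{M}^\text{u}_{n,t,r}|$ by the product of a bound on the number of admissible networks and a bound on the number of $t$-element subsets of the tree set displayed by any one of them.

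First I would fix such a network $N^\text{u}$. By \cref{prop:u-num-trees-per-net}, $N^\text{u}$ displays at most $\binom{n+3r-3}{r} \le 2^{n+3r-3}$ trees with $n$ leaves, hence the number of sets of $t$ trees displayed by $N^\text{u}$ is at most $\binom{2^{n+3r-3}}{t} \le \left(2^{n+3r-3}\right)^t = 2^{t(n+3r-3)}$. Next I would apply \cref{prop:u-num-networks}, which bounds the number of leaf-connecting unrooted networks with $n$ leaves and reticulation number $r$ by $n! \cdot (r-2)! \cdot 2^{2n+6r-6}$. Multiplying the two bounds gives
\[
  |\mathcal{M}^\text{u}_{n,t,r}| \le 2^{t(n+3r-3)} \cdot n! \cdot (r-2)! \cdot 2^{2n+6r-6},
\]
and finally I would simplify the exponent using $2n+6r-6 = 2(n+3r-3)$, so that $2^{t(n+3r-3)} \cdot 2^{2n+6r-6} = 2^{(t+2)(n+3r-3)}$, which yields exactly the claimed bound $|\mathcal{M}^\text{u}_{n,t,r}| \le 2^{(t+2)(n+3r-3)} \cdot n! \cdot (r-2)!$.

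There is no real obstacle: the statement is a purely arithmetic consequence of the two propositions. The only point requiring a little care is to confirm that the counting genuinely decomposes as "choice of network'' followed by "choice of a $t$-subset of its displayed trees'', i.e.\ that every pair in $\mathcal{M}^\text{u}_{n,t,r}$ arises this way and distinct pairs arise from distinct choices — which is immediate from the definition of $\mathcal{M}^\text{u}_{n,t,r}$ as a set of (network, tree-set) pairs.
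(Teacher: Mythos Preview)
Your proposal is correct and matches the paper's intended argument exactly: the paper simply states that \cref{prop:u-num-networks} and \cref{prop:u-num-trees-per-net} imply the corollary, and your decomposition (count networks, then count $t$-subsets of displayed trees, then multiply and simplify the exponent via $2n+6r-6 = 2(n+3r-3)$) is precisely how that implication is realized.
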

% ------------------------------------------------------------------------------

We can show the following analogue of \Cref{thm:r-bound} for unrooted trees now:

% ------------------------------------------------------------------------------
\begin{thm}\label{thm:u-r-bound}
  Let~$n,t\in\NN^+$ such that~$n\geq 6$, and
  let~$r$ be the smallest integer such that
  every set of $t$~binary unrooted trees on the leaf set~$[n]$
  is displayed by some binary unrooted network with $r$~reticulations.
  Then
  \begin{equation*}
    r > \frac{(t-1)n \lg_2 n - t(8n + \lg_2 n + \lg_2 t - 1)}{\lg_2 n + 3t + \lg_2 t}\\
    \in (t-1)n - O\left(\frac{nt^2}{\lg n + t}\right).
  \end{equation*}
\end{thm}
% ------------------------------------------------------------------------------

\begin{proof}
  Similar to the rooted case, we have
  \begin{equation*}
    \left(\frac{n - 1}{2}\right)^{t(n - 1)}\cdot t^{-t}
    \stackrel{\text{Obs.~\ref{obs:u-num-trees}}}{<} \binom{(2n-5)!!}{t} 
    = |\S^\text{u}_{n,t}| 
    \leq |\mathcal{M}^\text{u}_{n,t,r}|
    \stackrel{\text{Cor.~\ref{cor:u-num-net-tree-pairs}}}{\leq} 2^{(t + 2)(n + 3r - 3)}\cdot n!\cdot (r - 2)!,
  \end{equation*}
  and we can assume that $n + r \le tn$.
  Taking base-2 logarithms again gives
  \begin{align*}
    t(n - 1)\lg (n - 1) - t(n - 1) - t \lg t
    & < (t + 2)(n + 3r - 3) + \lg n! + \lg (r - 2)!\\
    & \stackrel{\mathclap{\text{\vrule width 0pt depth 3pt Lem.~\ref{lem:fac-bound}}}}{<} (t + 1)n + (t + 2)(3r - 3) + n\lg n + r\lg r\\
    & = r(3t + \lg r + 6) + n(t + \lg n + 1) - 3t - 6\\
    & \stackrel{\mathclap{\vrule width 0pt depth 3pt n+r\leq tn}}{<} r(3t + \lg tn) + n(\lg n + 7t)
  \end{align*}
  and, thus,
  \begin{align*}
    r
    & > \frac{t(n - 1)\lg n - t(n - 1) - t\lg t - n\lg n - 7tn}{\lg n + 3t + \lg t}
    = \frac{(t-1)n\lg n - t(8n + \lg n + \lg t - 1)}{\lg n + 3t + \lg t}\\
    & =(t-1)n - \frac{(t-1)n(3t+\lg t) + t(8n + \lg n + \lg t - 1)}{\lg n + 3t + \lg t}
    \in (t-1)n - O\left(\frac{nt^2}{\lg n + t}\right).
    \qedhere
  \end{align*}
\end{proof}

A case analysis similar to the rooted case now proves the following corollaries.

% ------------------------------------------------------------------------------
\begin{cor}\label{cor:sub-root-log-trees-unrooted}
  For all $t \in o\left(\sqrt{\lg n}\right)$, there is a set $\unrooted{\T}$ of $t$~binary unrooted
  trees on the leaf set $[n]$ such that every binary unrooted network $\unrooted{N}$
  that displays the trees in $\unrooted{\T}$ has reticulation number at least~$(t - 1)n - o(n)$.
\end{cor}
% ------------------------------------------------------------------------------

% ------------------------------------------------------------------------------
\begin{cor}\label{cor:sublog-trees-unrooted}
  For all $t \in o(\lg n)$, there is a set $\unrooted{\T}$ of $t$~binary unrooted trees on
  the leaf set $[n]$ such that every binary unrooted network $\unrooted{N}$ that
  displays the trees in $\unrooted{\T}$ has reticulation number at least~$(t - 1)n - o(tn)$.
\end{cor}
% ------------------------------------------------------------------------------

% ------------------------------------------------------------------------------
\begin{cor}\label{cor:log-trees-unrooted}
  Let~$t = c\lg_2 n$, for some constant~$c>0$.
  Then there is a set $\unrooted{\T}$ of $t$ binary unrooted trees on the leaf set
  $[n]$ such that every binary unrooted network $\unrooted{N}$ that displays the trees in
  $\unrooted{\T}$ has reticulation number
  \begin{equation*}
    r > \frac{(c\lg_2 n - 1)n \lg_2 n - O(n\lg n)}{(3c+1)\lg_2 n + o(\lg n)} 
    \in \frac{c}{3c+1+o(1)}n\lg_2 n - O(n).
  \end{equation*}
  In particular, as $c$ increases, $r$ approaches $\frac{1}{3}n\lg_2 n$.
\end{cor}
% ------------------------------------------------------------------------------

% ------------------------------------------------------------------------------
\section{Conclusion}
% ------------------------------------------------------------------------------

We have shown that, for any sub-logarithmic (in the number of leaves) number~$t$,
there is some set of $t$~binary trees for which the trivial binary network
with~$(t - 1)n$~reticulations is the best possible network that displays these trees, up to lower-order terms.
This implies that these trees have almost no common structure.
In particular, this result holds for any constant number of trees, which,
as discussed in the introduction,
has the important implication that cluster reduction is not safe
for computing the hybridization number of four or more trees.
This can be interpreted as a strong argument against phylogenetic
reconstruction by parsimony as follows.
All input trees admitting the same cluster can
reasonably be considered a strong biological signal.
Our results imply that no network displaying all trees can represent this
signal if it is forced to have a minimum reticulation number.
Indeed, it is more reasonable to expect the correct evolutionary history
to reside among slightly non-optimal solutions.

Our results also imply that, for any $c>0$, there are sets of $c\lg_2 n$~binary
trees that cannot be displayed by a binary network with $o(n \lg n)$~reticulations.
This suggests that most of the
reticulations in the optimal network that displays \emph{all} trees on $n$~leaves,
which has $\Theta(n \lg n)$~reticulations~\cite{BS18},
are due to only a tiny fraction of the exponentially many trees.

Two obvious open questions remain: First, for $t\in o(\lg n)$, we believe that
there are sets of $t$~binary trees that require~$(t - 1)n - o(n)$ reticulations to
display, even though we were only able to prove the weaker bound
of $(t-1)n-o(tn)$.
This is probably due to substantial overcounting of the number of sets of
$t$~trees that can be displayed by some network with $r$~reticulations.
In particular, we bound the number of these sets by the number of
pairs~$(N,\T)$ such that $\T$ is a set of $t$~trees, and~$N$ is a network
with $r$~reticulations that displays~$\T$.
We suspect that many such sets~$\T$ are displayed by many networks with
$r$~reticulations, so we count them often. There are other sources of
overcounting in our proofs, but this is most likely the most egregious.

The second open question concerns the bound on the number of reticulations
needed to display $c\lg_2 n$~unrooted binary trees in
\cref{cor:log-trees-unrooted}.
This bound is roughly one third of the bound for rooted trees shown in
\cref{cor:t-log}.
We believe that this gap is a caveat of our proof and that
there is also a set of $c\lg_2 n$ unrooted trees that require
close to $n \lg_2 n$~reticulations to display as $c$ increases.

\printbibliography[notcategory=ignore]

\end{document}